\definecolor{dblue}{rgb}{0,0,0.70}
\newtheorem{theorem}{Theorem}[section]
\newtheorem*{theorem*}{Theorem}
\newaliascnt{lemma}{theorem}
\newtheorem{lemma}[lemma]{Lemma}
\newtheorem*{lemma*}{Lemma}
\newaliascnt{claim}{theorem}
\newtheorem{claim}[claim]{Claim}
\newaliascnt{proposition}{theorem}
\newtheorem{proposition}[proposition]{Proposition}
\newaliascnt{corollary}{theorem}
\newtheorem{corollary}[corollary]{Corollary}
\newtheorem*{subclaim}{Subclaim}
\theoremstyle{remark}
\newaliascnt{remark}{theorem}
\newtheorem{remark}[remark]{Remark}
\newaliascnt{question}{theorem}
\newtheorem{question}[question]{Question}
\newtheorem*{question*}{Question}
\newaliascnt{definition}{theorem}
\newtheorem{definition}[definition]{Definition}
\newaliascnt{example}{theorem}
\renewcommand{\restriction}{\mathbin\upharpoonright}
\newcommand{\axiomft}[1]{\mathsf{#1}}
\newcommand{\ZFC}{\axiomft{ZFC}}
\newcommand{\CH}{\axiomft{CH}}
\newcommand{\ZF}{\axiomft{ZF}}
\newcommand{\HOD}{\mathrm{HOD}}
\newcommand{\HS}{\axiomft{HS}}
\DeclareMathOperator{\dom}{dom}
\DeclareMathOperator{\rng}{rng}
\DeclareMathOperator{\supp}{supp}
\DeclareMathOperator{\rank}{rank}
\DeclareMathOperator{\sym}{sym}
\DeclareMathOperator{\fix}{fix}
\DeclareMathOperator{\id}{id}
\DeclareMathOperator{\aut}{Aut}
\DeclareMathOperator{\Col}{Col}
\DeclareMathOperator{\iCol}{Col_{inj}}
\DeclareMathOperator{\Add}{Add}
\newcommand{\forces}{\mathrel{\Vdash}}
\newcommand{\nforces}{\mathrel{\not{\forces}}}
\newcommand\PP{\mathbb{P}}
\newcommand{\sF}{\mathscr F}
\newcommand{\sG}{\mathscr G}
\newcommand{\1}{\mathds{1}}
\newcommand{\tup}[1]{\langle#1\rangle}
\newenvironment{enumerate-(a)}{\begin{enumerate}[label={\upshape (\alph*)}, leftmargin=2pc]}{\end{enumerate}}
\newenvironment{enumerate-(1)}{\begin{enumerate}[label={\upshape (\arabic*)}, leftmargin=2pc]}{\end{enumerate}}
\author{Asaf Karagila}
\address[Asaf Karagila]{School of Mathematics,
University of East Anglia.
Norwich, NR4~7TJ, UK
}
\email[Asaf Karagila]{karagila@math.huji.ac.il}
\author{Philipp Schlicht}
\address[Philipp Schlicht]{School of Mathematics,
University of Bristol,
Fry Building.
Woodland Road,
Bristol, BS8~1UG, UK}
\email[Philipp Schlicht]{philipp.schlicht@bristol.ac.uk}
\date{June 11, 2020}
\subjclass[2010]{Primary 03E25; Secondary 03E40}
\keywords{symmetric extensions, Cohen's first model, Axiom of Choice, Cohen forcing, Dedekind-finite sets}
\title[How to have more things by forgetting how to count them]{How to have more things by\\ forgetting how to count them}
\dedicatory{In loving memory of Matti Rubin}
\begin{document}
\begin{abstract}
Cohen's first model is a model of Zermelo--Fraenkel set theory in which there is a Dedekind-finite set of real numbers, and it is perhaps the most famous model where the Axiom of Choice fails. We force over this model to add a function from this Dedekind-finite set to some infinite ordinal $\kappa$. In the case that we force the function to be injective, it turns out that the resulting model is the same as adding $\kappa$ Cohen reals to the ground model, and that we have just added an enumeration of the canonical Dedekind-finite set. In the case where the function is merely surjective it turns out that we do not add any reals, sets of ordinals, or collapse any Dedekind-finite sets. This motivates the question if there is any combinatorial condition on a Dedekind-finite set $A$ which characterises when a forcing will preserve its Dedekind-finiteness or not add new sets of ordinals. We answer this question in the case of ``Adding a Cohen subset'' by presenting a varied list of conditions each equivalent to the preservation of Dedekind-finiteness. For example, $2^A$ is extremally disconnected, or $[A]^{<\omega}$ is Dedekind-finite.
\end{abstract}
\maketitle
\section{Introduction}
Cohen developed the method of forcing to prove that Cantor's Continuum Hypothesis is not provable from the axioms of Zermelo--Fraenkel and the Axiom of Choice. He then quickly adapted the known techniques for producing models where the Axiom of Choice fails using atoms (or urelements) to match the method of forcing. His models, therefore, proved that the Axiom of Choice does not follow from the rest of the axioms of Zermelo--Fraenkel set theory. In this model there is a canonical set of real numbers which is Dedekind-finite, that is infinite but without a countably infinite subset.

Over the years we see time and time again how rich and interesting the theory of Cohen's first model is. Recently, for example, Beriashvili, Schindler, Wu, and Yu proved in \cite{BSWY:2018} that in Cohen's first model there is a Hamel basis for the real numbers as a vector space over the rational numbers.

From a modern perspective, Cohen's first model is constructed by adding a countable sequence of Cohen reals, and then ``forgetting the enumeration, while remembering the set'' using a method called symmetric extensions that extends the method of forcing and is the main tool in proving consistency results related to the Axiom of Choice. We give an overview of this technique in \autoref{sec:prelim} and an overview of Cohen's first model in \autoref{sec:cohens-model}.

In this paper we show that forcing over Cohen's first model can have some counterintuitive results. Our two main results to that effect are \autoref{thm:injective-collapse}, which shows that we can introduce an arbitrary enumeration of the canonical Dedekind-finite set and the resulting model is itself the appropriate Cohen extension of the ground model; and \autoref{thm:collapse} where we show that an analogue of the Levy collapse adds a surjection from the canonical Dedekind-finite set onto any fixed ordinal, but does not add new sets of ordinals. In particular, this forcing preserves Dedekind-finiteness of sets in Cohen's first model.

Forcing (or generic) extensions of Cohen's first model were studied by Monro in \cite{Monro:1983}, where he shows that it is possible to add by a forcing extension a set which cannot be linearly ordered. This shows that, unlike the Axiom of Choice, the statement ``every set can be a linearly ordered'' is not preserved when taking generic extensions. This line of study was extended more recently by Hall, Keremedis, and Tachtsis in \cite{HKT:2013} where the authors study Monro's model, as well as a generic extensions of their own doing, in order to show the unprovability of certain weak choice principles related to ultrafilters on $\omega$.

Some of our arguments (\autoref{thm:collapse} in particular) are based on ideas in another work of Monro \cite{Monro:1973} (later developed by the first author in \cite{Karagila:2019} and by Shani in \cite{Shani:2018}), where adding Cohen subsets to Dedekind-finite sets is used iteratively to prove the independence of certain weak choice principles from one another. In these works it is crucial that no new sets of ordinals are added, and in particular no real numbers. When working over Cohen's first model, this means that Dedekind-finiteness is preserved by adding a Cohen subset.

In \autoref{sec:characterisations} we study Dedekind-finite sets which remain Dedekind-finite after adding a Cohen subset to them. We give 10 different equivalent conditions for this preservation, and we show that if the Dedekind-finite set is a set of real numbers, like in Cohen's first model, then these conditions are satisfied.
\section{Preliminaries}\label{sec:prelim}
Since we are dealing with models of $\ZF$ and with cardinals, it is worth clarifying what we mean by cardinals. We say that a set $X$ \textit{can be well-ordered}, or that it is \textit{well-orderable}, if there is an ordinal $\alpha$ and a bijection $f\colon X\to\alpha$. If $X$ can be well-ordered, the cardinal of $X$ is the smallest ordinal in bijection with $X$. If, however, $X$ cannot be well-ordered, we utilise Scott's trick and define its cardinal as the set $\{Y\in V_\alpha\mid\exists f\colon X\to Y\text{ a bijection}\}$, where $\alpha$ is the least ordinal for which the set is not empty. The letters $\kappa$ and $\lambda$ will always denote well-ordered cardinals.

We say that a set $X$ is \textit{Dedekind-finite} if it has no countably infinite subset, although we will use the term Dedekind-finite exclusively to mean that it is also infinite, as finite sets already have a much shorter name. It is a simple exercise to prove that $X$ is Dedekind-finite if and only if every injection $f\colon X\to X$ is a bijection. We note that if $X$ is a Dedekind-finite set which can be linearly ordered, e.g.\ a subset of the real numbers, then $[X]^{<\omega}=\{a\subseteq X\mid a\text{ is finite}\}$ is Dedekind-finite as well. This is because every finite subset of $X$ can be uniformly enumerated, so the union of countably many finite subsets will be a countable subset of $X$. Note that it is possible that $X$ is a Dedekind-finite set while $[X]^{<\omega}$ is Dedekind-infinite, for example $\bigcup\{P_n\mid n<\omega\}$ where each $P_n$ is a pair, and no infinite family of pairs admits a choice function (such set is sometimes referred to as a Russell set, or a socks set).

Our forcing terminology is standard. We say that $\PP$ is a notion of forcing if it is a partially ordered set with a maximum, $\1_\PP$. We call the elements of $\PP$ conditions, and we say that a condition $q$ is \textit{stronger} than a condition $p$, or that it \textit{extends} $p$, if $q\leq p$. We also follow Goldstern's alphabet convention which dictates that if $p,q$ are both conditions in $\PP$, then $p$ will not denote a stronger condition than $q$. If two conditions have a joint extension we say that they are \textit{compatible}, otherwise they are \textit{incompatible}. $\PP$-names are denoted by $\dot x$, and canonical names for ground model objects are denoted by $\check x$ when $x$ is the object in the ground model.

If $X$ is a set, $\Add(\omega,X)$ is the forcing whose conditions are finite partial functions $p\colon X\times\omega\to 2$. We denote by $\supp(p)$ the support of $p$, which is the maximal (finite) subset of $X$ such that $p\colon\supp(p)\times\omega\to2$. In the context of $\ZF$, if $A$ is a set, we denote by $\Add(A,X)$ the set of partial functions $p\colon X\times A\to 2$ such that $\dom p$ can be well-ordered and $|p|<|A|$.

Finally, given a family of $\PP$-names, $\{\dot x_i\mid i\in I\}$, we denote by $\{\dot x_i\mid i\in I\}^\bullet$ the obvious name this family defines, that is $\{\tup{\1_\PP,\dot x_i}\mid i\in I\}$. This notation extends to ordered pairs and to sequences as well. Using this notation, $\check x=\{\check y\mid y\in x\}^\bullet$.
\subsection{Symmetric extensions}\label{subsection:sym-ext}
The method of forcing, albeit very useful, preserves the Axiom of Choice when it holds in the ground model. In order to accommodate consistency proofs related to the failure of the Axiom of Choice we need to extend the method of forcing. Let $\PP$ be a notion of forcing, and let $\pi$ be an automorphism of $\PP$. Then $\pi$ acts on $\PP$-names via this recursive definition:\[\pi\dot x=\{\tup{\pi p,\pi\dot y}\mid\tup{p,\dot y}\in\dot x\}.\]

Let $\sG$ be a group of automorphisms of $\PP$. We say that $\sF$ is a \textit{filter of subgroups over $\sG$} if it is a filter on the lattice of subgroups, namely it is a nonempty family of subgroups of $\sG$ closed under finite intersections and supergroups. We say that $\sF$ is \textit{normal} if whenever $H\in\sF$ and $\pi\in\sG$, $\pi H\pi^{-1}\in\sF$ as well.

We call $\tup{\PP,\sG,\sF}$ a \textit{symmetric system} if $\PP$ is a notion of forcing, $\sG$ is a subgroup of $\aut(\PP)$, and $\sF$ is a normal filter of subgroups over $\sG$. In all cases it is enough to require that $\sF$ is a basis for a normal filter instead of a filter.

Let us fix a symmetric system for the rest of the section. We denote by $\sym_\sG(\dot x)$ the group $\{\pi\in\sG\mid\pi\dot x=\dot x\}$, also called the stabiliser of $\dot x$. We say that $\dot x$ is \textit{$\sF$-symmetric} if $\sym_\sG(\dot x)\in\sF$. When $\dot x$ is $\sF$-symmetric, and this condition holds hereditarily for the names in $\dot x$, we say that $\dot x$ is \textit{hereditarily $\sF$-symmetric}. The class $\HS_\sF$ denotes the class of all hereditarily $\sF$-symmetric names. When the symmetric system is clear from the context, and here it will always be clear from context, we omit the subscripts.

The forcing relation can be relativised to $\HS$, and we use $\forces^\HS$ to denote this relativisation.

\begin{lemma*}[The Symmetry Lemma]
Let $p\in\PP$ be a condition, $\pi\in\aut(\PP)$, and let $\dot x$ be a $\PP$-name, then $p\forces\varphi(\dot x)\iff\pi p\forces\varphi(\pi\dot x)$.
Moreover, if $\pi\in\sG$ and $\dot x\in\HS$, then we can replace $\forces$ by $\forces^\HS$.
\end{lemma*}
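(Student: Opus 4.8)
The plan is to isolate a single fact and then transport the forcing relation across it: the automorphism $\pi$ commutes with evaluation of names along generic filters. For a $V$-generic $G\subseteq\PP$ write $\pi G=\{\pi p\mid p\in G\}$; since $\pi$ preserves the order and incompatibility it sends $V$-generics to $V$-generics, and because $\pi\in V$ we have $V[\pi G]=V[G]$. First I would prove, by induction on the rank of $\dot x$, the evaluation identity
\[(\pi\dot x)_{\pi G}=\dot x_G.\]
The inductive step reads off the two recursive definitions: the pairs $\tup{p,\dot y}\in\dot x$ correspond bijectively to the pairs $\tup{\pi p,\pi\dot y}\in\pi\dot x$, the membership $p\in G$ is equivalent to $\pi p\in\pi G$, and the inductive hypothesis disposes of the inner names $\dot y$.

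With the identity in hand, the equivalence follows by transport along the bijection $G\mapsto\pi G$ between generics containing $p$ and generics containing $\pi p$. By the forcing theorem, $\pi p\forces\varphi(\pi\dot x)$ asserts that $V[H]\models\varphi((\pi\dot x)_H)$ for every generic $H\ni\pi p$; substituting $H=\pi G$ and applying $V[\pi G]=V[G]$ together with the evaluation identity rewrites this as ``$V[G]\models\varphi(\dot x_G)$ for every generic $G\ni p$'', which is precisely $p\forces\varphi(\dot x)$. It suffices to prove one implication for all $\pi$ at once, since instantiating it at $\pi^{-1}$, $\pi p$ and $\pi\dot x$ gives the converse. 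To keep this rigorous without positing generic filters over $V$, one replaces the semantic argument by the parallel induction on the complexity of $\varphi$ via the definable forcing relation; in that formulation the evaluation identity above is exactly the content of the atomic cases $\dot x\in\dot y$ and $\dot x=\dot y$.

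For the moreover clause I would first check that $\pi$ permutes $\HS$. The key computation is $\sym_\sG(\pi\dot x)=\pi\,\sym_\sG(\dot x)\,\pi^{-1}$, so normality of $\sF$ yields $\sym_\sG(\pi\dot x)\in\sF$ whenever $\sym_\sG(\dot x)\in\sF$; reading this hereditarily shows $\dot x\in\HS$ implies $\pi\dot x\in\HS$, and since $\pi^{-1}\in\sG$ as well, $\pi$ is a bijection of $\HS$ onto itself. Hence the symmetric extension is $\pi$-invariant: $\{\dot x_G\mid\dot x\in\HS\}=\{(\pi\dot x)_{\pi G}\mid\dot x\in\HS\}=\{\dot y_{\pi G}\mid\dot y\in\HS\}$, so the model computed from $G$ agrees with the one computed from $\pi G$, and the argument of the previous paragraph runs verbatim with $V[G]$ replaced by the symmetric extension and $\forces$ by $\forces^\HS$. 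The hard part is less any individual step than the bookkeeping in the rigorous version: the evaluation identity is an induction on name rank, the main equivalence an induction on formula complexity, and in the definable-forcing setup these must be interleaved so that the atomic cases are settled before the propositional and quantifier cases; normality of $\sF$ is precisely what keeps the whole induction confined to $\HS$ for the relativised statement.
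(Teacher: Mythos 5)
Your proof is correct and is exactly the standard argument that the paper defers to: the first part is Jech's Lemma~14.37 (induction on the complexity of $\varphi$ with the evaluation identity $(\pi\dot x)_{\pi G}=\dot x_G$ as the atomic case), and your treatment of the moreover clause via $\sym_\sG(\pi\dot x)=\pi\sym_\sG(\dot x)\pi^{-1}$ and normality of $\sF$ is precisely the paper's one-line observation that $\dot x\in\HS$ if and only if $\pi\dot x\in\HS$. Nothing is missing; you have simply written out the details the paper leaves to the citation.
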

The first part of the proof appears as Lemma~14.37 in \cite{Jech:ST2003}, and the last sentence is an easy consequence of the fact that $\dot x\in\HS$ if and only if $\pi\dot x\in\HS$.

\begin{theorem*}
Let $G\subseteq\PP$ be a $V$-generic filter and let $M=\HS^G=\{\dot x^G\mid\dot x\in\HS\}$. Then $M$ is a transitive class model of $\ZF$ in $V[G]$ such that $V\subseteq M$.
\end{theorem*}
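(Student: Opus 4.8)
The plan is to run the classical verification that a symmetric submodel models $\ZF$, using throughout that $\HS$ is closed under the relevant name-constructions and that the Symmetry Lemma lets the stabiliser of a list of parameters stabilise any name defined from them. First I would check transitivity and $V\subseteq M$. If $\dot x\in\HS$ and $\tup{p,\dot y}\in\dot x$ with $p\in G$, then $\dot y\in\HS$ by the hereditary clause, so $\dot y^G\in M$; hence $M$ is transitive. For $V\subseteq M$, every automorphism fixes canonical names, so $\sym(\check x)=\sG\in\sF$ hereditarily, giving $\check x\in\HS$ and $\check x^G=x$. In particular $\Ord\subseteq M$, and since $M$ is a transitive subclass of $V[G]$, Extensionality and Foundation are inherited, while Infinity holds as $\omega\in V\subseteq M$.

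Pairing and Union are dispatched by exhibiting names. For Pairing, $\{\dot x,\dot y\}^\bullet$ is fixed by $\sym(\dot x)\cap\sym(\dot y)\in\sF$ and is hereditarily symmetric since $\dot x,\dot y$ are; it evaluates to $\{x,y\}$. For Union, the obvious name collecting the second coordinates of second coordinates is fixed by $\sym(\dot x)$. The engine of the remaining axioms is Separation, and here I would rely on the forcing theorem for $\forces^\HS$ (itself a consequence of the Symmetry Lemma). Given $\dot x\in\HS$ and a formula $\varphi$ with parameters $\dot p_1,\dots,\dot p_n\in\HS$, set
\[\dot z=\set{\tup{q,\dot y}\mid \dot y\in\dom(\dot x),\ q\forces^\HS(\dot y\in\dot x\wedge\varphi(\dot y,\dot p_1,\dots,\dot p_n))}.\]
Let $H=\sym(\dot x)\cap\bigcap_{i\leq n}\sym(\dot p_i)\in\sF$. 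By the Symmetry Lemma every $\pi\in H$ sends each defining condition to another such condition, so $\pi\dot z=\dot z$; thus $\dot z$ is $\sF$-symmetric, and it is hereditarily symmetric because every $\dot y\in\dom(\dot x)$ is. The forcing theorem then yields $\dot z^G=\set{y\in x\mid M\models\varphi(y,\dots)}$.

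For Power Set and Replacement the obstacle — and the main difficulty of the proof — is that the class of relevant $\HS$-names is proper, so I must bound ranks using Replacement in $V$. For Power Set, fix $\dot x\in\HS$ and $H=\sym(\dot x)$. Each subset of $x$ lying in $M$ has an $\HS$-name, and by Replacement in $V$ some ordinal $\theta$ bounds the ranks of minimal such names; put
\[\dot P=\set{\tup{\1,\dot y}\mid \dot y\in\HS,\ \rank(\dot y)<\theta}.\]
Since $\pi\in H$ preserves rank and maps $\HS$ into $\HS$, we get $H\subseteq\sym(\dot P)$, so $\dot P\in\HS$; intersecting $\dot P^G$ with the true power set inside $M$ via Separation delivers $\power(x)\cap M\in M$. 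Replacement is analogous: collect witnesses of bounded rank into one symmetric name and cut down by Separation. Throughout, the one genuinely delicate point is verifying that these constructed names actually lie in $\HS$, and this rests entirely on the normality of $\sF$ together with the Symmetry Lemma, which together guarantee that the stabiliser of the parameters stabilises every definition built from them.
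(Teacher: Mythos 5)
The paper does not actually prove this theorem: it states it and cites Theorem~15.51 of \cite{Jech:ST2003}, so there is no in-paper argument to compare against. Your proposal is, in outline, the standard proof that the citation points to, and most of it is correct: transitivity, $V\subseteq M$, Extensionality/Foundation/Infinity, Pairing, Union, and Separation via the name $\dot z$ built from $\forces^\HS$ are all handled exactly as in the classical treatment, and the key point --- that any $\pi$ in the intersection of the stabilisers of the parameters permutes $\dom(\dot x)$ and sends defining pairs of $\dot z$ to defining pairs, so $\pi\dot z=\dot z$ --- is stated correctly.

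The one step whose justification does not work as written is Power Set (and the parallel step in Replacement). You invoke ``Replacement in $V$'' to bound the ranks of minimal $\HS$-names of subsets of $x$ lying in $M$, but the collection you are applying Replacement to is not a set in $V$: it lives in $V[G]$. There are two standard repairs. Either apply Replacement in $V[G]$ (legitimate, since $V[G]\models\ZF$; the resulting bound $\theta$ is still an ordinal of $V$, so $\dot P$ is a genuine name, and in fact $\sym(\dot P)=\sG$ since the definition of $\dot P$ never mentions $\dot x$); or, as in \cite{Jech:ST2003}, avoid rank bounds entirely by noting that every subset of $x$ in $M$ with name $\dot z\in\HS$ is also named by
\[\dot z'=\set{\tup{q,\dot y}\midd \dot y\in\dom(\dot x),\ q\forces^\HS\dot y\in\dot z},\]
which lies in $\HS$ because $\sym(\dot x)\cap\sym(\dot z)\subseteq\sym(\dot z')$, and the names of this restricted form (subsets of $\PP\times\dom(\dot x)$) already constitute a set in $V$ by Power Set there. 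With either repair, your argument goes through; the same device handles Replacement.
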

The model $M$ in the theorem is called a \textit{symmetric extension (of $V$)}. The theorem appears in \cite{Jech:ST2003} as Theorem~15.51.

\section{Cohen's first model}\label{sec:cohens-model}
Cohen's first model is the classical example of a model of set theory where the Axiom of Choice fails. This model was investigated by Halpern and Levy in \cite{HalpernLevy:1967} where they prove that every set in the model can be linearly ordered, and much more. (The model is sometimes referred to as the Halpern--Levy, or the Cohen--Halpern--Levy model.) This model has many presentations in the literature (see Chapter~5 in \cite{Jech:AC} for a comprehensive analysis of the construction, for example). For convenience of the reader we give a brief overview of the construction here as well.

We assume that $V$ satisfies $\ZFC$,\footnote{Traditionally $V$ is taken as $L$, but this is not important.} and we take $\PP$ to be $\Add(\omega,\omega)$. Our group of automorphisms is given by the group of finitary permutations of $\omega$ acting on $\PP$ in the natural way: \[\pi p(\pi n,m)=p(n,m),\] or equivalently: $\pi p(n,m)=p(\pi^{-1} n,m)$.\footnote{While the latter definition seems more natural from a naive point of view, it is in fact the former that is easier to work with.} And finally, $\sF$ is the filter of subgroups generated by $\{\fix(E)\mid E\in[\omega]^{<\omega}\}$, where $\fix(E)=\{\pi\in\sG\mid \pi\restriction E=\id\}$.\footnote{Because we work with pointwise stabilisers of finite sets, using the full symmetry group of $\omega$ is the same as using finitary permutations.} If $\fix(E)\subseteq\sym(\dot x)$, we say that $E$ is a \textit{support} for $\dot x$.

For each $n$, define the name $\dot a_n$ as the canonical name for the $n$th Cohen real, i.e.\ $\{\tup{p,\check m}\mid p(n,m)=1\}$, and let $\dot A=\{\dot a_n\mid n<\omega\}^\bullet$.

\begin{claim}
If $\pi\in\sG$, then $\pi\dot a_n=\dot a_{\pi n}$, therefore $\pi\dot A=\dot A$. Consequently, $\dot A\in\HS$.
\end{claim}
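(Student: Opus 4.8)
The plan is to verify the pointwise identity $\pi\dot a_n=\dot a_{\pi n}$ by unwinding the recursive definition of the action on names, and then to read off the two corollaries. First I would expand, straight from the definition, $\pi\dot a_n=\{\tup{\pi p,\pi\check m}\mid p(n,m)=1\}$. Two observations drive the computation. One is that $\pi\check m=\check m$: this follows by a trivial induction on $m$, since canonical names are built using only the condition $\1_\PP$ and $\pi\1_\PP=\1_\PP$. The other is that $p\mapsto\pi p$ is a bijection of $\PP$, so I may substitute $q=\pi p$ and let $q$ range freely over all conditions.

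The heart of the argument is this change of variables. Using the defining relation $\pi p(\pi n,m)=p(n,m)$ specialised at the coordinate $n$, I get $q(\pi n,m)=\pi p(\pi n,m)=p(n,m)$, so the requirement $p(n,m)=1$ is equivalent to $q(\pi n,m)=1$. Substituting, $\pi\dot a_n=\{\tup{q,\check m}\mid q(\pi n,m)=1\}$, which is exactly $\dot a_{\pi n}$ by definition.

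For $\pi\dot A=\dot A$, I would apply the action to $\dot A=\{\tup{\1_\PP,\dot a_n}\mid n<\omega\}$, obtaining $\{\tup{\1_\PP,\dot a_{\pi n}}\mid n<\omega\}$ by the previous step together with $\pi\1_\PP=\1_\PP$; since $n\mapsto\pi n$ permutes $\omega$, the family $\{\dot a_{\pi n}\mid n<\omega\}$ is the same set as $\{\dot a_n\mid n<\omega\}$, so the name is literally unchanged. Consequently $\sym(\dot A)=\sG$, and as $\sG=\fix(\emptyset)\in\sF$, the name $\dot A$ is $\sF$-symmetric.

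Finally, to conclude $\dot A\in\HS$ I would check heredity. The only names occurring in $\dot A$ are the $\dot a_n$, and for each one the identity $\pi\dot a_n=\dot a_{\pi n}$ shows that every $\pi\in\fix(\{n\})$ fixes $\dot a_n$; hence $\{n\}$ is a support and $\sym(\dot a_n)\in\sF$. The names appearing inside each $\dot a_n$ are the check-names $\check m$, which are hereditarily symmetric since they are fixed by all of $\sG$. I expect no genuine obstacle here; the only point requiring care is the bookkeeping in the change of variables, where one must respect the convention $\pi p(\pi n,m)=p(n,m)$ rather than its inverse, and keep the action on names $\pi\dot x=\{\tup{\pi p,\pi\dot y}\mid\tup{p,\dot y}\in\dot x\}$ straight throughout.
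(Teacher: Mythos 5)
Your proof is correct, and the paper states this claim without proof precisely because the intended argument is the direct computation you carry out: unwind the action on names, use $\pi\check m=\check m$, change variables via $\pi p(\pi n,m)=p(n,m)$ to get $\pi\dot a_n=\dot a_{\pi n}$, and then observe that $\pi$ permutes the index set so $\dot A$ is literally fixed, with heredity witnessed by the supports $\{n\}$. Nothing is missing.
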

\begin{proposition}
$\1\forces^\HS\dot A$ is Dedekind-finite.
\end{proposition}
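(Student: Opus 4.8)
The plan is to argue by contradiction via the Symmetry Lemma, exploiting that a hereditarily symmetric name for an injection $\check\omega\to\dot A$ can only ``see'' finitely many of the Cohen reals, whereas its range must be infinite. Suppose toward a contradiction that the statement fails, so there are a condition $p$ and a name $\dot f\in\HS$ with $p\forces^\HS\dot f\colon\check\omega\to\dot A$ is injective. Fix a finite support $E$ for $\dot f$, that is $\fix(E)\subseteq\sym(\dot f)$.

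First I would locate a value of $\dot f$ landing on a Cohen real whose index lies outside $E$. Since the $\dot a_n$ are forced pairwise distinct and $\dot A=\{\dot a_n\mid n<\omega\}^\bullet$, a density argument shows that below $p$ one can decide $\dot f(\check k)=\dot a_n$ for some (unique) $n$. If every such decision, for every $k<\omega$ and every $q\le p$, produced an index $n\in E$, then $p$ would force $\rng(\dot f)\subseteq\{\dot a_i\mid i\in E\}^\bullet$, a finite set, contradicting the injectivity of $\dot f$ on $\check\omega$. Hence I can fix $k<\omega$, a condition $q\le p$, and an index $n\notin E$ with $q\forces^\HS\dot f(\check k)=\dot a_n$.

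Next I would apply an automorphism that fixes $\dot f$ but moves $\dot a_n$. Choose $m\notin E\cup\supp(q)\cup\{n\}$ and let $\pi$ be the transposition $(n\,m)$. Since $\pi$ moves only $n$ and $m$, neither of which lies in $E$, we have $\pi\in\fix(E)\subseteq\sym(\dot f)$, so $\pi\dot f=\dot f$; moreover $\pi\check k=\check k$ and $\pi\dot a_n=\dot a_{\pi n}=\dot a_m$. The Symmetry Lemma in its $\forces^\HS$ form, applicable as $\dot f,\check k,\dot a_n\in\HS$, then yields $\pi q\forces^\HS\dot f(\check k)=\dot a_m$.

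The main point, and the step I expect to require the most care, is the compatibility of $q$ and $\pi q$: because $m$ was chosen outside $\supp(q)$, the conditions $q$ and $\pi q$ carry no conflicting data on the columns $n$ and $m$ and agree on all other columns, so $r=q\cup\pi q$ is a common extension. Then $r\forces^\HS\dot f(\check k)=\dot a_n$ and $r\forces^\HS\dot f(\check k)=\dot a_m$, whence $r\forces\dot a_n=\dot a_m$. This contradicts $\1\forces\dot a_n\neq\dot a_m$, which holds because densely many conditions separate columns $n$ and $m$. This contradiction completes the proof.
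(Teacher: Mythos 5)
Your proof is correct and follows essentially the same route as the paper: fix a finite support $E$ for $\dot f$, find a condition forcing a value $\dot a_n$ with $n\notin E$, and use a transposition in $\fix(E)$ moving $n$ to a fresh index to produce two compatible conditions forcing contradictory values. The only cosmetic difference is that the paper shows directly that the range of $\dot f$ is forced into the finite set $\{\dot a_i\mid i\in E\}^\bullet$ and concludes non-injectivity, whereas you assume injectivity first and extract the witness $n\notin E$ from that; the two organisations are equivalent.
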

\begin{proof}
Suppose that $\dot f\in\HS$ and $p\forces^\HS\dot f\colon\check\omega\to\dot A$. Let $E$ be a support for $\dot f$, and without loss of generality $\supp(p)\subseteq E$ as well.

Let $n\notin E$ be some natural number, and assume towards contradiction that $q\leq p$ is a condition such that $q\forces^\HS\dot f(\check m)=\dot a_n$ for some $m<\omega$. Let $j<\omega$ be some natural number such that $j\notin E\cup\supp(q)$, and let $\pi$ be the $2$-cycle $(n\ j)$. Then the following hold:
\begin{enumerate}
\item $\pi\in\fix(E)$ and therefore $\pi p=p$ and $\pi\dot f=\dot f$.
\item $\pi\dot a_n=\dot a_j$.
\item $\pi q$ is compatible with $q$, since the only coordinates changed between $\pi q$ and $q$ are $j$ and $n$, but these are mutually exclusive to the conditions.
\item $\pi q\forces^\HS\pi\dot f(\pi\check m)=\pi\dot a_n$ which is to say, by the above, $\pi q\forces^\HS\dot f(\check m)=\dot a_j$.
\end{enumerate}
Therefore $q\cup\pi q\forces^\HS``\dot a_n=\dot f(\check m)=\dot a_j$ and $\dot a_n\neq\dot a_j"$. This is impossible, therefore the assumption that there are such $q$ and $m$ must be false. Therefore, $p$ forces that the range of $\dot f$ is finite, and in fact a subset of $\{\dot a_n\mid n\in E\}^\bullet$, so in particular, $p$ must force that $\dot f$ is not injective.
\end{proof}
In the following two sections we work in the Cohen model. We fix a $V$-generic filter $G\subseteq\PP$ and denote by $M$ the symmetric extension obtained by it and the symmetric system defined here. We will write $a_n$ and $A$ to denote $\dot a_n^G$ and $\dot A^G$ respectively.

\begin{remark}
One can prove that Cohen's model can be presented as $V(A)$, namely the smallest transitive model of $V[G]$ containing $V$ and having $A$ as an element, where $G\subseteq\PP$ is $V$-generic; or alternatively it is $\HOD_{V,A\cup\{A\}}^{V[G]}$, i.e.\ the class of all sets in $V[G]$ which are hereditarily definable from an element of $V$ and finitely many elements of $A$ and $A$ itself.

The idea that $V(A)$ is the symmetric extension is relatively straightforward, and it is worth sketching the argument behind it. On the one hand, since $V\subseteq M$, and $A\in M$ we immediately have $V(A)\subseteq M$. On the other hand, by analysing the proof of Lemma~5.25 and Lemma~5.26 in \cite{Jech:AC}, we see that if $x\in M$, then we can assign to it a minimal finite subset, $A_0$, of $A$ and a name in $V$ such that $A_0$ is the copy of a support of the name, and from this we can define $x$ using $A_0,A$ and the name from $V$ as parameters. By induction on $\rank x$ we get that $M\subseteq V(A)$.
\end{remark}
\section{Injective collapse}\label{sec:injective}
For two sets $X$ and $Y$, define $\iCol(X,Y)$ as the partial order given by well-orderable partial injections $p\colon X\to Y$. We note that $\iCol(X,Y)$ is isomorphic to $\iCol(Y,X)$. In this section we will focus on $\iCol(A,\kappa)$ when $\kappa$ is an infinite ordinal,\footnote{We can of course assume it is a cardinal, but the assumption is never used.} and since $A$ is Dedekind-finite, the conditions are finite. It will be easier to work with $\iCol(\kappa,A)$ instead, and as it is isomorphic to $\iCol(A,\kappa)$, we can do that without a problem.

Let $f\colon\kappa\to\omega$ be a finite partial injection, and let $\dot q_f$ denote the following name: \[\dot q_f=\left\{\tup{\check\alpha,\dot a_{f(\alpha)}}^\bullet\mid\alpha\in\dom f\right\}^\bullet.\]
\begin{claim}\label{claim:canonical-names}
If $\iCol(\check\kappa,\dot A)^\bullet=\{\dot q_f\mid f\colon\kappa\to\omega\text{ is a finite partial injection}\}^\bullet$, then $(\iCol(\check\kappa,\dot A)^\bullet)^G=\iCol(\kappa,A)$. In particular, if $p$ forces that $\dot q$ is a name for a condition in $\iCol(\kappa,A)$, then there is $p'\leq p$ and $f$ such that $p'\forces\dot q=\dot q_f$.
\end{claim}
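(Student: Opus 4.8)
The plan is to prove the displayed set equality first and then read off the ``in particular'' clause from the forcing theorem. Before either, I would record two preliminary facts. First, that each $\dot q_f$ lies in $\HS$: a direct computation using $\pi\dot a_n=\dot a_{\pi n}$ from the preceding claim and $\pi\check\alpha=\check\alpha$ shows that $\pi\dot q_f=\dot q_{\pi\circ f}$ for every $\pi\in\sG$, where $\pi\circ f$ is again a finite partial injection $\kappa\to\omega$. In particular $\fix(\rng f)\subseteq\sym(\dot q_f)$, so $\dot q_f$ is symmetric with support $\rng f$, and since its constituents $\check\alpha$ and $\dot a_{f(\alpha)}$ are hereditarily symmetric, $\dot q_f\in\HS$. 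The same computation shows that $\pi$ merely permutes the family $\{\dot q_f\mid f\}$, so $\iCol(\check\kappa,\dot A)^\bullet$ is fixed by all of $\sG$ and hence lies in $\HS$; thus its evaluation is genuinely an element of $M$. Second, that the $a_n$ are pairwise distinct, which is the usual density argument and which I would simply cite.

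Next I would prove $(\iCol(\check\kappa,\dot A)^\bullet)^G=\iCol(\kappa,A)$, reading the right-hand side as computed inside $M$. The crucial point is that in $M$ the set $A$ is Dedekind-finite, so every well-orderable subset of $A$ is finite; hence the conditions of $\iCol(\kappa,A)$ are exactly the finite partial injections $\kappa\to A$. For ``$\subseteq$'' I would unwind the definitions: $\dot q_f^G$ is the finite partial injection $\alpha\mapsto a_{f(\alpha)}$ on $\dom f$, which is injective since $f$ is injective and the $a_n$ are distinct, so $\dot q_f^G\in\iCol(\kappa,A)$. For ``$\supseteq$'', given a finite partial injection $g\in\iCol(\kappa,A)$, I would use that the enumeration $\tup{a_n\mid n<\omega}$ exists in $V[G]$ to set $f(\alpha)$ equal to the unique $n$ with $a_n=g(\alpha)$; then $f$ is a finite set of pairs of ground-model ordinals, so $f\in V$, it is a finite partial injection by distinctness, and $\dot q_f^G=g$.

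Finally, the ``in particular'' clause follows from the equality together with the forcing theorem. Suppose $p\forces^\HS\dot q\in\iCol(\check\kappa,\dot A)^\bullet$ and fix a $V$-generic $G$ with $p\in G$; then $\dot q^G\in\iCol(\kappa,A)$, so $\dot q^G=\dot q_f^G$ for some finite partial injection $f$ by the equality just proved, and the forcing theorem yields some $r\in G$ with $r\forces^\HS\dot q=\dot q_f$, whence any common extension $p'\leq p,r$ in $G$ works. The one genuinely load-bearing step is the inclusion ``$\supseteq$'': I must check that an arbitrary finite $g$ is captured by a ground-model $f$, and this is exactly where Dedekind-finiteness of $A$ (forcing the conditions to be finite) and distinctness of the $a_n$ are both essential — read in $V[G]$ rather than in $M$, the poset $\iCol(\kappa,A)$ would contain infinite injections and the equality would fail. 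I would therefore state carefully that the right-hand side is computed in $M$ and that $f\in V$, so that $\dot q_f$ is a legitimate name.
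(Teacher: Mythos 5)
Your proof is correct, and it fills in exactly the routine argument the paper omits (the claim is stated without proof): the computation $\pi\dot q_f=\dot q_{\pi\circ f}$ giving symmetry, the distinctness of the $a_n$, the two inclusions, and the standard genericity argument for the ``in particular'' clause are all as intended. You are also right to flag the two genuinely load-bearing points --- that $\iCol(\kappa,A)$ must be computed in $M$ (where Dedekind-finiteness of $A$ makes all conditions finite) and that the finite $f$ recovered from a condition $g$ lies in $V$ because it is a finite set of pairs of ordinals --- so nothing further is needed.
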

\begin{theorem}\label{thm:injective-collapse}
Let $F\colon\kappa\to A$ be an $M$-generic function for $\iCol(\kappa,A)$. Then $F$ is $V$-generic for $\Add(\omega,\kappa)$. In particular, $M[F]=V[F]$, and all cardinals are preserved.
\end{theorem}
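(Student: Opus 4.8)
The plan is to show that the $M$-generic filter $F \colon \kappa \to A$ for $\iCol(\kappa, A)$ corresponds, through the canonical names $\dot q_f$, to a $V$-generic filter for $\Add(\omega, \kappa)$. The key observation is that a condition $q_f \in \iCol(\kappa, A)$, given by a finite partial injection $f \colon \kappa \to \omega$, records exactly which Cohen real $a_{f(\alpha)}$ sits at each $\alpha \in \dom f$. Since each $a_n$ is a fully generic Cohen real over $V$, specifying finitely many pairs $(\alpha, a_{f(\alpha)})$ is \emph{not} the same as a finite condition in $\Add(\omega, \kappa)$, which only specifies finitely many bits; the dictionary between the two must pass through the underlying generic $G$. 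So first I would set up the translation carefully: from the $M$-generic $F$ together with the ambient $V$-generic $G \subseteq \PP = \Add(\omega, \omega)$, I want to read off a function $\kappa \times \omega \to 2$ and argue it is $V$-generic for $\Add(\omega, \kappa)$.

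Concretely, for each $\alpha \in \kappa$, genericity of $F$ should guarantee that $\alpha \in \dom F$, so $F(\alpha) = a_{n_\alpha}$ for a unique $n_\alpha < \omega$; then define $H \colon \kappa \times \omega \to 2$ by $H(\alpha, m) = a_{n_\alpha}(m)$, i.e.\ $H$ reassembles the Cohen reals indexed now by $\kappa$ via the matching $F$ provides. First I would verify, using \autoref{claim:canonical-names}, that every condition in $\iCol(\kappa, A)$ is (forced to be) some $\dot q_f$, and that the map $q_f \mapsto$ the corresponding finite piece of information about $H$ is order-preserving and dense-set-respecting in the right sense. The main genericity argument is then: given a dense open $D \subseteq \Add(\omega, \kappa)$ in $V$, I must find a condition in $F$ (equivalently, in $G$ via $\dot q_f$) forcing that $H$ meets $D$. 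Here one exploits that the Cohen reals $a_n$ are mutually $V$-generic and that $F$, being $M$-generic, chooses the indices $n_\alpha$ with enough freedom: a density/amalgamation argument inside $\iCol(\kappa, A)$ lets one extend any condition to decide finitely many bits of $H$ arbitrarily, because distinct coordinates of $\PP$ are independent.

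The hardest part will be the genericity direction, establishing that $H$ is $V$-generic for $\Add(\omega, \kappa)$ rather than merely $M$-generic or $M[F]$-generic. The subtlety is that $\Add(\omega, \kappa)$ lives in $V$, but the natural density arguments want to quantify over dense sets in $M$ or use the generic $F$; I would need to show that meeting every $V$-dense set can be arranged, which amounts to checking that the two-step iteration $\PP * \iCol(\check\kappa, \dot A)$ (over $\HS$) is, as a complete Boolean algebra or at least as a forcing producing the same generic extension, isomorphic to $\Add(\omega, \kappa)$ over $V$. The cleanest route is probably to build an explicit dense embedding or isomorphism between a dense subset of this iteration and $\Add(\omega, \kappa)$, using the $\dot q_f$ to linearise the data: a pair $(p, \dot q_f)$ carries a finite condition $p \in \Add(\omega, \omega)$ together with the injection $f$, and one checks this encodes precisely a finite condition on $\kappa \times \omega$ after relabeling coordinates by $f$. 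Verifying that this assignment is well-defined, injective up to equivalence, and has dense image is the combinatorial heart of the proof.

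Once $H$ is shown $V$-generic for $\Add(\omega, \kappa)$, the remaining claims follow quickly: $M[F] = V[H] = V[F]$ because $F$ and $H$ are interdefinable (from $H$ one recovers the $\kappa$-indexed Cohen reals and hence $F$, and from $F$ together with $G$ one recovers $H$), and $A$ itself becomes the range of $F$, now enumerated, so $A \in V[F]$ is just the set of the first $\omega$-many of these reals in the appropriate sense. Cardinal preservation is then immediate since $\Add(\omega, \kappa)$ is $\sigma$-finite-closed, in fact ccc in $V$, so it preserves all cardinals. I would close by remarking that this exhibits the $M$-generic injective collapse as nothing more than a Cohen forcing over $V$ that retroactively supplies the enumeration that the symmetric model deliberately forgot.
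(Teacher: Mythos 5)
Your proposal follows essentially the same route as the paper: viewing $F$ as a function $\kappa\times\omega\to 2$ by replacing each Cohen real with its characteristic function, and translating between conditions $\tup{p,\dot q_f}$ of the two-step iteration and conditions of $\Add(\omega,\kappa)$ via relabelling coordinates by $f$ (the paper's map $r_{p,f}(\alpha,n)=p(f(\alpha),n)$), which is then used to pull back each dense $D^*\subseteq\Add(\omega,\kappa)$ in $V$ to a dense subset of $\iCol(\kappa,A)$ in $M$; the conclusion $M[F]=V[F]$ via $M=V(A)$ and $A=\rng F$, and cardinal preservation from the ccc, are likewise as in the paper. The combinatorial verification you defer (dense image and order-preservation of the relabelling) is exactly the short density computation the paper carries out, so there is no gap in the approach.
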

Since $F$ is not a filter, by $M$-generic we mean that in every dense $D\subseteq\iCol(\kappa,A)$ in $M$, there is some $p\in D$ such that $p\subseteq F$. In other words, $F$ is the function given by the generic filter.
\begin{proof}
For a pair $p\in\PP$ (where $\PP=\Add(\omega,\omega)$) and a name $\dot q_f$, let $r_{p,f}$ be the condition in $\Add(\omega,\kappa)$ defined by $r_{p,f}(\alpha,n)=p(f(\alpha),n)$.

%Let $\dot D$ be a name in $\HS$ such that some $p_0\forces^\HS\dot D$ is a dense open subset of $\iCol(\check\kappa,\dot A)^\bullet$. Let $D^*$ be $\{r_{p,f}\mid p\forces\dot q_f\in\dot D\}$. We claim that $D^*$ is a dense open subset of $\Add(\omega,\kappa)$. To see that, suppose that $r\in\Add(\omega,\kappa)$ is any condition, and let $f'\colon\supp(r)\to\omega$ be some injective function such that $\rng f'\cap\dom p=\varnothing$. Then $\dot q_{f'}$ is a name for a condition in $\iCol(\kappa,A)$. There is some $p\leq p_0$ such that $p(f'(\alpha),n)=r(\alpha,n)$ and some $f$ which extends $f'$, such that $p\forces\dot q_f\in\dot D$. Then $r_{p,f}\in D^*$, and if $\tup{\alpha,n}\in\dom r$, then $r(\alpha,n)=p(f'(\alpha),n)=r_{p,f}(\alpha,n)$, and therefore $r_{p,f}\leq r$.

If $D^*$ is a dense subset of $\Add(\omega,\kappa)$, then we define a name for a subset of $\iCol(\kappa,A)$:\[\dot D=\{\tup{p,\dot q_f}\mid\exists r\in D^*\text{ such that }f\colon\supp(r)\to\omega\text{ is injective and }r=r_{p,f}\}.\]
We claim that $\dot D$ is a name for a dense set. Suppose that $\dot q_{f'}$ is any condition. Let $p'$ be some condition such that $\supp(p')=\rng(f')$ (we may extend $f'$ if necessary, thus strengthening $\dot q_{f'}$), and let $r'=r_{p',f'}$. By density there is some $r\in D^*$ such that $r\leq r'$, then we can extend $f'$ to any injective $f$ and define $p$ by $p(f(\alpha),n)=r(\alpha,n)$. Then by definition, $\tup{p,\dot q_f}\in\dot D$ so $p\forces\dot q_f\in\dot D$. But $p\leq p'$ and $\dot q_{f'}\subseteq\dot q_f$. In other words, for every name $\dot q_{f'}$ for a condition in $\iCol(\kappa,A)$ we showed that every condition in $\PP$ can be strengthened to one which forces some extension of $\dot q_{f'}$ to be in $\dot D$.

Suppose now that $F$ is an $M$-generic function for $\iCol(\kappa,A)$, and let $D^*\in V$ be a dense open subset of $\Add(\omega,\kappa)$. Let $\dot D$ be the name obtained from $D^*$ as above, since $F$ is $M$-generic, there is some $p\in G$ and $\dot q_f$ such that for some $r\in D^*$, $r=r_{p,f}$ and $\dot q_f^G\subseteq F$. But this means that $r\subseteq F$, when $F$ is seen as a function from $\kappa\times\omega\to 2$, replacing each Cohen real in $A$ by its characteristic function. Therefore $F$ is $V$-generic for $\Add(\omega,\kappa)$ as wanted. Finally, since $A=\rng F$ and $M=V(A)\subseteq V[F]$, we have $M[F]=V[F]$ as well.
\end{proof}

\begin{corollary}
  The symmetric extension of $V$ given by imitating Cohen's first model, using $\Add(\omega,\kappa)$ (with finitary permutations of $\kappa$, etc.) and $F$ as in \autoref{thm:injective-collapse} as the generic, is $M$.
\end{corollary}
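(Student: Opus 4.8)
The plan is to recognise the new symmetric extension as $V(A)$ and then quote the Remark in \autoref{sec:cohens-model}, which already identifies $M$ with $V(A)$. Write $N$ for the symmetric extension of $V$ obtained from the symmetric system $\tup{\Add(\omega,\kappa),\sG',\sF'}$, where $\sG'$ is the group of finitary permutations of $\kappa$ acting on $\Add(\omega,\kappa)$ in the natural way and $\sF'$ is the filter of subgroups generated by $\{\fix(E)\mid E\in[\kappa]^{<\omega}\}$, evaluated at the generic $F$; forming $N$ is legitimate precisely because \autoref{thm:injective-collapse} guarantees that $F$ is $V$-generic for $\Add(\omega,\kappa)$. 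Let $\dot a'_\alpha=\{\tup{p,\check m}\mid p(\alpha,m)=1\}$ be the canonical name for the $\alpha$th Cohen real in this system, and set $A'=\{a'_\alpha\mid\alpha<\kappa\}$, the canonical Dedekind-finite set of $N$.

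First I would check that $A'=A$. Under the identification of $F$ with a function $\kappa\times\omega\to 2$ used in the proof of \autoref{thm:injective-collapse}, the $\alpha$th column of $F$ is exactly the characteristic function of the real $F(\alpha)\in A$; hence $a'_\alpha=(\dot a'_\alpha)^F=F(\alpha)$ and therefore $A'=\rng F$. It then remains to see that $F$ is onto $A$: for each $a\in A$ the set $D_a=\{p\in\iCol(\kappa,A)\mid a\in\rng p\}$ is definable from $a$ and $A$, hence lies in $M$, and it is dense, since given any (finite) partial injection $p$ one may choose $\alpha\in\kappa\setminus\dom p$ and extend $p$ by $\alpha\mapsto a$. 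So $M$-genericity of $F$ forces $a\in\rng F$, whence $\rng F=A$ and $A'=A$.

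Next I would run the argument of the Remark for the system over $\kappa$. Exactly as there, through the analysis of Lemma~5.25 and Lemma~5.26 of \cite{Jech:AC}, now with supports drawn from $[\kappa]^{<\omega}$ and their copies inside $A'$, one obtains $N=V(A')$. Combining this with $A'=A$ gives $N=V(A)$.

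Finally I would compare with $M$. The Remark yields $M=V(A)$, and the construction $V(A)$, the least transitive model of $\ZF$ containing $V$ with $A$ as an element, is absolute between any two $\ZF$-models that share $V$, all the ordinals, and $A$. Since \autoref{thm:injective-collapse} gives $M[F]=V[F]$, both $M$ and $N$ are computed inside $V[F]$, and so $N=V(A')^{V[F]}=V(A)^{V[F]}=M$. The only genuinely delicate point is the first step, namely verifying that the canonical Cohen reals $a'_\alpha$ of the $\kappa$-system are literally the values $F(\alpha)$ and that $F$ exhausts $A$; once $A'=A$ is secured, the remainder is a direct transcription of the Remark together with the absoluteness of the $V(A)$ construction.
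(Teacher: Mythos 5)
Your proposal is correct and follows essentially the same route as the paper: the paper's own (one-line) proof also reduces the corollary to the identification $M=V(A)$ from the Remark, noting that the same analysis works for $\Add(\omega,\kappa)$, and it already records $A=\rng F$ in the proof of \autoref{thm:injective-collapse}. You simply spell out the details the paper leaves implicit (the density argument for surjectivity of $F$ onto $A$, the identification $a'_\alpha=F(\alpha)$, and the absoluteness of the $V(A)$ construction), all of which are sound.
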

\begin{proof}
This is true since $M=V(A)$, and the argument for this equality is the same even when using $\Add(\omega,\kappa)$, which is easy to see from analysing the same proofs as in the case $\kappa=\omega$.\footnote{We can also obtain the same by applying Feferman's theorem appearing as Problem~23 in Chapter~5 of \cite{Jech:AC} instead of the proof analysis.}
\end{proof}

\begin{remark}
The corollary means that the process works in reverse as well, namely, starting with $\Add(\omega,\lambda)$ with finitary permutations of $\lambda$ and a filter of subgroups generated by pointwise stabilisers of finite subsets of $\lambda$, we end up with an analogue of the Cohen model where we have a set of Cohen reals which is Dedekind-finite. Using finite injective functions $f\colon\kappa\to\lambda$ provides us with the same proof as above.
\end{remark}

This result is odd. Indeed, upon first reading, it makes no sense. It quite literally implies that there is a bijection between $\omega$ and $\kappa$. But we should point out that all it implies \textit{inside $V$} is that there is a \textit{generic} bijection between them, i.e.\ we can generically collapse $\kappa$ to be countable. Moreover, the generic objects that we always refer to are not guaranteed to exist ``out of the blue'', rather we have a working assumption that $V$ is some countable transitive model in a larger universe. And of course, under this assumption, $\kappa$ is in fact a countable ordinal.

In the case where $\kappa$ is singular of countable cofinality (recall that we only required that $\kappa$ is infinite), it is well-known that adding $\aleph_\omega$ Cohen reals (to a model of $\CH$, at least), adds $\aleph_{\omega+1}$ of them. When we move from $A$ having order-type $\omega_1$ to $\omega_\omega$, we seemingly add a lot more reals, which will soon disappear as we move again, say to $\omega_2$. This is the place to point out, of course, that the additional reals are the consequence of being able to define new reals from certain infinite subsequences of the generic, none of which is symmetric enough to enter the Cohen model itself.

And finally, a question.

\begin{question}
It is known that the Cohen model is rather impoverished when it comes to variety of Dedekind-finite set. Indeed, every Dedekind-finite set can be taken as a subset of $\omega\times[A]^{<\omega}$. Will the results be the same if we replace $A$ by any other Dedekind-finite set in the Cohen model?
\end{question}
One should make the obvious, and immediate, observation that taking the above question at face value the answer is no. Split $A$ into two infinite parts, $A_0$ and $A_1$ (e.g., those reals which include $0$ and those that omit it), and force with $\iCol(\kappa,A_0)$ instead. It is not hard to see that we do not add any enumeration of $A_1$, which therefore remains Dedekind-finite. However over the symmetric model that is $V(A_0)$, the result was indeed the same as above.
\section{``Levy collapse'' without adding reals (or sets of ordinals)}\label{sec:levy-collapse}
For two sets $X$ and $Y$, denote by $\Col(X,Y)$ the set of partial functions $p\colon X\to Y$ such that $|p|$ is well-ordered and $|p|<|X|$, ordered by reverse inclusion. This coincides with the standard definition when $X$ can be well-ordered, but we will focus on the case where $X=A$ in Cohen's first model, meaning that the conditions are, as before, finite functions.

In a manner similar to \autoref{claim:canonical-names}, if $q\in\Col(A,\kappa)$ is a condition, where $\kappa$ is some well-ordered cardinal, then there is some $f\colon\omega\to\kappa$ in $V$ such that $q=\dot q_f^G$, with $\dot q_f$ defined as before.
\begin{theorem}\label{thm:collapse}
Let $\kappa$ be an infinite cardinal and let $G\subseteq\Col(A,\kappa)$ be an $M$-generic filter. Then $M$ and $M[G]$ have the same sets of ordinals.
\end{theorem}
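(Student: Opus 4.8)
The plan is to present $M[G]$ as a symmetric extension of the ground model $V$, and then to argue entirely inside $V$ that no new sets of ordinals appear. I would let $\QQ$ be the forcing whose conditions are finite partial functions $f\colon\omega\to\kappa$ ordered by reverse inclusion, and work with the product $\PP\times\QQ$ where $\PP=\Add(\omega,\omega)$. Let $\sG$ be the finitary permutations of $\omega$ acting diagonally by $\pi(p,f)=(\pi p,\pi f)$, with $\pi p$ as in \autoref{sec:cohens-model} and $\pi f=f\circ\pi^{-1}$, and let $\sF$ be generated by the $\fix(E)$ for $E\in[\omega]^{<\omega}$. Writing a generic for the product as $G_0\times g$, one checks that the name $\dot F=\{\tup{(p,f),\tup{\dot a_n,\check{f(n)}}^\bullet}\mid n\in\dom f\}$ is fixed by all of $\sG$, so the surjection $F\colon A\to\kappa$ given by $F(a_n)=g(n)$ lies in the symmetric extension $N:=\HS^{G_0\times g}$. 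Since also $V\subseteq N$ and $M=V(A)\subseteq N$, and since $G$ is recoverable from $F$ inside $N$ (as $q\in G\iff q\subseteq F$ and $\Col(A,\kappa)\in M$), we get $M[G]\subseteq N$. Thus it suffices to show that every set of ordinals of $N$ lies in $M$.

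So I would fix $y\in N$ a set of ordinals, with a hereditarily symmetric name $\dot x$, a support $E$, and a condition $(p_0,f_0)$ in the generic forcing $\dot x\subseteq\check\Ord$. For a condition $(p,f)$ write $p\restriction E$ for $p\restriction(E\times\omega)$ and $f\restriction E$ for $f\restriction E$. The heart of the matter is a \emph{mixing lemma}: if $(p,f)\forces\check\alpha\in\dot x$ then already $(p\restriction E,f\restriction E)\forces\check\alpha\in\dot x$, and symmetrically for $\check\alpha\notin\dot x$. To prove it, suppose some $(p',f')\leq(p\restriction E,f\restriction E)$ forced $\check\alpha\notin\dot x$. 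I would choose $\pi\in\fix(E)$ moving the indices of $(\supp p\cup\dom f)\setminus E$ to fresh ones disjoint from $\supp p'\cup\dom f'$. Then $\pi$ fixes $\dot x$ and $\check\alpha$, so by the Symmetry Lemma $\pi(p,f)\forces\check\alpha\in\dot x$; moreover $\pi(p,f)$ agrees with $(p,f)$ on the $E$-coordinates, hence still extends $(p\restriction E,f\restriction E)$, and off $E$ it lives on disjoint indices, so it is compatible with $(p',f')$. Their common extension forces both $\check\alpha\in\dot x$ and $\check\alpha\notin\dot x$, a contradiction.

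With the mixing lemma available, I would set $s:=g\restriction E$, a finite function lying in $V$, and define the $\PP$-name $\dot y:=\{\tup{p,\check\alpha}\mid\supp(p)\subseteq E\text{ and }(p,s)\forces_{\PP\times\QQ}\check\alpha\in\dot x\}$. Since any $\pi\in\fix(E)$ fixes $s$, fixes $\dot x$, and fixes every $p$ with $\supp(p)\subseteq E$, and the forcing relation is automorphism-invariant, $\dot y$ has support $E$, so $\dot y\in\HS_\PP$ is a name for the Cohen model $M$. To finish I would check $\dot y^{G_0}=y$. If $\alpha\in y$, pick $(p,f)\in G_0\times g$ forcing $\check\alpha\in\dot x$; the mixing lemma gives $(p\restriction E,f\restriction E)\forces\check\alpha\in\dot x$, and strengthening the second coordinate to $s\supseteq f\restriction E$ yields $(p\restriction E,s)\forces\check\alpha\in\dot x$ with $p\restriction E\in G_0$, so $\alpha\in\dot y^{G_0}$. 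Conversely, if $p\in G_0$ witnesses $\alpha\in\dot y^{G_0}$, then $(p,s)\in G_0\times g$ forces $\check\alpha\in\dot x$, so $\alpha\in y$. Hence $y=\dot y^{G_0}\in M$.

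The hard part is the mixing lemma, and it is exactly where the Dedekind-finiteness of $A$ is used, in the same spirit as the proof that $\dot A$ is Dedekind-finite: a condition constrains only finitely many Cohen-real indices and finitely many collapse-values, so an automorphism fixing the finite support $E$ can slide all information outside $E$ onto fresh indices, making two arbitrary extensions of the $E$-part compatible. It is this amalgamation that pins the truth value of $\check\alpha\in\dot x$ to the $E$-part of the generic, and crucially to its \emph{Cohen} part together with the $V$-parameter $s$, rather than to the enumeration that $M$ cannot see. I would also take a little care in the first step to confirm that $N$ genuinely captures $M[G]$, i.e.\ that $\dot F$ is symmetric and that $G$ is definable from $F$, but this is routine given the identification $M=V(A)$.
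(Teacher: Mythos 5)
Your proof is correct and takes essentially the same route as the paper: your product $\PP\times\QQ$ is exactly the paper's iteration $\PP\ast\Col(\dot A,\check\kappa)^\bullet$ restricted to the canonical conditions $\tup{p,\dot q_f}$, and your mixing lemma --- sliding the off-$E$ part of a condition onto fresh coordinates via an automorphism in $\fix(E)$, which works precisely because the collapse conditions need not be injective --- is the paper's central homogeneity argument, after which both proofs read off a $\PP$-name with support $E$. The one step deserving more than the ``little care'' you promise is identifying an arbitrary $M$-generic $G$ with one arising from a $V$-generic filter for the product (an $M$-generic filter need not a priori be $V[G_0]$-generic); the paper faces the same issue and defers it, via a footnote, to the theory of iterating symmetric extensions in \cite{Karagila:2019}.
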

\begin{proof}
Let $\dot X\in M$ be a $\Col(A,\kappa)$-name for a set of ordinals. It is easier to consider $\dot X$ as a name in the iteration $\PP\ast\Col(\dot A,\check\kappa)^\bullet$, whose projection to a $\PP$-name of a $\Col(A,\kappa)$-name, denoted by $[\dot X]$, is in $\HS$.\footnote{While this is quite simple to understand in this case, a more general theory of iterations of symmetric extensions was developed by the first author in \cite{Karagila:2019}.} Moreover, since we have such canonical names for conditions in $\Col(A,\kappa)$, and we are only interested in this iteration of two steps, we may assume that the conditions in this iteration have the form $\tup{p,\dot q_f}$.

Note that if $\pi\in\sG$, then $\pi$ acts on $\PP\ast\Col(\dot A,\check\kappa)^\bullet$ in the obvious way: \[\pi\tup{p,\dot q_f}=\tup{\pi p,\pi\dot q_f}=\tup{\pi p,\dot q_{f\circ\pi}}.\]
Let $\tup{p,\dot q_f}$ be a condition which forces that $\dot X$ is a name for a set of ordinals, and let $E$ be a support for $\dot X$, i.e.\ a finite subset of $\omega$ for which $\fix(E)\subseteq\sym([\dot X])$. We may assume that $\supp(p)=E=\dom f$. Suppose that $\tup{p_0,\dot q_{f_0}}$ and $\tup{p_1,\dot q_{f_1}}$ are two extensions of $\tup{p,\dot q_f}$. Again, we may assume that $\supp(p_i)=\dom f_i$ for $i<2$.

If $p_1\restriction E=p_2\restriction E$, then the two must agree on any statement of the form $\check\alpha\in\dot X$. This is because there is an automorphism in $\fix(E)$ moving $\supp(p_0)\setminus E$ to be disjoint of $\supp(p_1)$, which means that $\tup{\pi p_0,\pi\dot q_{f_0}}$ is compatible with $\tup{p_1,\dot q_{f_1}}$ while $\pi\check\alpha=\check\alpha$ and $\pi\dot X=\dot X$. Here we used the fact that $\dom f=E$, as well as the fact the conditions are not injective. Indeed, for injective functions agreeing on their common domain is not enough to be compatible.

In particular this means that if $\tup{p',\dot q_{f'}}\leq\tup{p,\dot q_f}$ and $\tup{p',\dot q_{f'}}\forces\check\alpha\in\dot X$, then $\tup{p'\restriction E,\dot q_{f'\restriction E}}=\tup{p'\restriction E,\dot q_f}$ already forced this statement, and the same for $\check\alpha\notin\dot X$, of course. Therefore the conclusion follows, and therefore $\dot X$ is a name for a set in $M$, given by the name $\dot X_f=\{\tup{p',\check\alpha}\mid\tup{p'\restriction E,\dot q_f}\forces\check\alpha\in\dot X\}$.
\end{proof}
This provides another proof of the known fact (see Problem~16 in Chapter~5 of \cite{Jech:AC}) that two models of $\ZF$ with the same sets of ordinals are not necessarily equal.\footnote{This requires, of course, that the Axiom of Choice fails in both models.}
\begin{corollary}
Forcing with $\Col(A,\kappa)$ over $M$ preserves cofinalities.\qed
\end{corollary}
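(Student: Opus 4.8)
The plan is to read cofinality preservation off directly from \autoref{thm:collapse}. That theorem says $M$ and $M[G]$ have exactly the same sets of ordinals, and the key observation is that any witness to a cofinality — namely a cofinal map between two ordinals — can be coded as a set of ordinals by an absolute pairing function. Hence no genuinely new cofinal maps between ordinals can appear in $M[G]$, and cofinalities cannot change.

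First I would recall that in $\ZF$ the cofinality of an ordinal $\mu$ is the least ordinal $\lambda$ admitting a map $f\colon\lambda\to\mu$ with unbounded range, a notion that requires no choice. Since $M\subseteq M[G]$, every such map in $M$ survives into $M[G]$, which gives the easy inequality $\cf^{M[G]}(\mu)\le\cf^M(\mu)$ for every ordinal $\mu$. For the reverse inequality I would set $\lambda=\cf^{M[G]}(\mu)$, fix a cofinal $f\colon\lambda\to\mu$ in $M[G]$, and fix the G\"odel pairing $\Gamma\colon\Ord\times\Ord\to\Ord$. Putting $X=\{\Gamma(\alpha,f(\alpha))\mid\alpha<\lambda\}$, this $X$ is a set of ordinals in $M[G]$, so \autoref{thm:collapse} yields $X\in M$. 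As $\Gamma$ is a bijection whose inverse is absolute, one recovers $f=\{\Gamma^{-1}(\beta)\mid\beta\in X\}$ inside $M$, so $f\in M$ witnesses $\cf^M(\mu)\le\lambda=\cf^{M[G]}(\mu)$. Combining the two inequalities gives $\cf^M(\mu)=\cf^{M[G]}(\mu)$ for every $\mu$.

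There is no serious obstacle here — this is why the statement can be dispatched almost immediately after the theorem — and the only point deserving a line of care is the coding step. One must check that $\Gamma$ and $\Gamma^{-1}$ are absolute enough for the reconstruction of $f$ from $X$ to be carried out within $M$; since G\"odel pairing is given by a $\ZF$-absolute recursion on ordinals this is routine, and the corollary follows.
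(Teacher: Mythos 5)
Your proof is correct and is exactly the argument the paper intends: the corollary is marked with a \qed precisely because preservation of cofinalities follows immediately from \autoref{thm:collapse} by coding a cofinal map as a set of ordinals, which is what you spell out. No gaps; the G\"odel-pairing absoluteness point you flag is indeed the only thing to check, and it is routine.
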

\begin{corollary} \label{cor:Adding a Cohen subset to Cohen's Dedekind finite set}
$A$ is still Dedekind-finite after forcing with $\Col(A,\kappa)$.
\end{corollary}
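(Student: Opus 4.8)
The plan is to argue by contradiction, using \autoref{thm:collapse} together with the fact, established in the Proposition of \autoref{sec:cohens-model}, that $A$ is Dedekind-finite in $M$. So suppose for contradiction that $A$ is Dedekind-infinite in $M[G]$. Then there is an injection $f\colon\omega\to A$ lying in $M[G]$, whose range is a countably infinite subset of $A$; it suffices to derive that $f$ already belongs to $M$.

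The key observation is that every element of $A$ is a Cohen real, i.e.\ a subset of $\omega$, so the entire function $f$ can be coded by a single set of ordinals. Concretely, I would fix a pairing function $\tup{\cdot,\cdot}\colon\omega\times\omega\to\omega$ in $V$ and set
\[
X=\{\tup{n,m}\mid n<\omega,\ m\in f(n)\}\subseteq\omega.
\]
Then $X$ is a set of ordinals belonging to $M[G]$. By \autoref{thm:collapse}, $M$ and $M[G]$ have exactly the same sets of ordinals, and hence $X\in M$.

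The remaining step is to decode $f$ from $X$ inside $M$. Working in $M$, define $g(n)=\{m<\omega\mid\tup{n,m}\in X\}$ for each $n<\omega$. This definition is absolute and appeals to no choice, so $g\in M$, and by construction $g(n)=f(n)\in A$ for every $n$, whence $g=f$. Thus $f$ is an injection $\omega\to A$ lying in $M$, contradicting that $A$ is Dedekind-finite there. This contradiction shows that $A$ remains Dedekind-finite in $M[G]$.

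I do not anticipate a genuine obstacle: granting \autoref{thm:collapse}, the transfer of $X$ from $M[G]$ down to $M$ is immediate. The only points requiring care are that the members of $A$ really are subsets of $\omega$, so that the coding into a set of ordinals is available, and that both the coding and the decoding are sufficiently absolute that no use of choice is smuggled into $M$. Both are clear from the construction of $A$ as the set of Cohen reals $a_n$.
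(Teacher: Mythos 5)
Your proposal is correct and follows the paper's own argument exactly: the paper likewise codes the injection $f\colon\omega\to A$ as a single subset of $\omega$ (using that elements of $A$ are reals), invokes \autoref{thm:collapse} to conclude $f\in M$, and derives a contradiction with Dedekind-finiteness of $A$ in $M$. You have merely spelled out the pairing-function coding that the paper leaves implicit.
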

\begin{proof}
Suppose not, then there is an injective function $f\colon\omega\to A$ in $M[G]$, where $G$ is $M$-generic for $\Col(A,\kappa)$, and $f$ can be coded as a real, namely a subset of $\omega$. However, by \autoref{thm:collapse} no new reals are added, and therefore $f\in M$. This is a contradiction since $A$ is Dedekind-finite in $M$.
\end{proof}
\begin{corollary}
Every Dedekind-finite set remains Dedekind-finite after forcing with $\Col(A,\kappa)$.
\end{corollary}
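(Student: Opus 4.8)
The plan is to reduce to \autoref{thm:collapse} exactly as in the proof of \autoref{cor:Adding a Cohen subset to Cohen's Dedekind finite set}, the only new ingredient being a uniform way to code the elements of an arbitrary Dedekind-finite set by reals. I would fix a Dedekind-finite $B\in M$, suppose towards a contradiction that $B$ is Dedekind-infinite in $M[G]$, and take an injection $g\colon\omega\to B$ in $M[G]$. It then suffices to show that $g\in M$: for then $g$ witnesses that $B$ is already Dedekind-infinite in $M$, contrary to hypothesis. To bring $g$ within reach of \autoref{thm:collapse} I would realise it as (the decoding of) a set of ordinals of $M[G]$.

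First I would use the fact, recalled above, that in $M$ every Dedekind-finite set is in bijection with a subset of $\omega\times[A]^{<\omega}$. Any such bijection lies in $M$, hence survives to $M[G]$, and Dedekind-infiniteness transfers along it; so without loss of generality $B\subseteq\omega\times[A]^{<\omega}$. Next I would code each element uniformly by a real. An element $\tup{k,s}$ of $\omega\times[A]^{<\omega}$ consists of a natural number $k$ together with a finite set $s$ of reals (elements of $A$); since $s$ is finite it carries the canonical increasing enumeration induced by the lexicographic order of $2^\omega$, and pairing that finite sequence of reals with $k$ yields a real $c(\tup{k,s})$. The resulting map $c$ is an injection which is definable, and computed identically, in both $M$ and $M[G]$, since the ordering, enumeration, and pairing involved are all absolute.

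Finally, applying $c$ pointwise, the function $n\mapsto c(g(n))$ is a sequence of reals in $M[G]$, that is, a subset of $\omega\times\omega$ and hence a set of ordinals of $M[G]$. By \autoref{thm:collapse} this set of ordinals already lies in $M$, and decoding it inside $M$ --- again an absolute operation, using only the sequence and the set $A$ --- returns $g$, so $g\in M$ and we have our contradiction. I expect the coding step to be the crux of the matter: the whole argument rests on the elements of a Dedekind-finite set of $M$ being representable uniformly by reals (equivalently, by sets of ordinals), which is precisely what lets \autoref{thm:collapse} do the work, just as the special case $B=A$ of \autoref{cor:Adding a Cohen subset to Cohen's Dedekind finite set} rested on an injection $\omega\to A$ being literally a sequence of reals.
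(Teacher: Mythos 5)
Your proof is correct, but it takes a genuinely different route from the paper's. Both arguments begin from the same structural fact about Cohen's model, namely that every Dedekind-finite set injects into $[A]^{<\omega}\times\omega$, and both ultimately rest on \autoref{thm:collapse}; they diverge after that. The paper stays at the level of Dedekind-finiteness: it uses \autoref{cor:Adding a Cohen subset to Cohen's Dedekind finite set} to see that $A$ stays Dedekind-finite, notes that $A$ is still a linearly ordered set of reals in $M[G]$ so that $[A]^{<\omega}$ stays Dedekind-finite as well (the uniform-enumeration fact from the preliminaries), and then transfers this along the injection into $[A]^{<\omega}\times\omega$. You instead go back to \autoref{thm:collapse} directly: you code each element of $B\subseteq\omega\times[A]^{<\omega}$ by a real, using the canonical (lexicographic) enumeration of a finite set of reals, so that a putative injection $g\colon\omega\to B$ in $M[G]$ becomes a set of ordinals and hence already lies in $M$. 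Your coding step is sound --- the enumeration of a finite $s\subseteq A$ is choiceless and absolute, as is the decoding --- and it is really the same trick that makes $[A]^{<\omega}$ Dedekind-finite for linearly ordered $A$ in the paper's version. What your route buys is a more self-contained and slightly stronger conclusion (any enumeration of $B$ in $M[G]$ is already in $M$), and it avoids the mildly delicate final transfer step in the paper (where one must argue that a countable subset of $B$ would yield either a countable subset of $[A]^{<\omega}$ or a countable subset of $B$ concentrated on finitely many fibres $\set{s}\times\omega$ and hence already in $M$); what the paper's route buys is brevity, by reusing the preceding corollary and the combinatorial fact about finite subsets of linearly ordered sets already recorded in \autoref{sec:prelim}.
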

\begin{proof}
There is an injection from every set in $M$ into $[A]^{<\omega}\times\eta$ for some ordinal $\eta$, therefore for a Dedekind-finite set in $M$ we can take $\eta=\omega$. But this means that if $A$ remains Dedekind-finite, so must $[A]^{<\omega}$, as $A$ is a set of real numbers, and therefore every other Dedekind-finite set remain Dedekind-finite as well.
\end{proof}
In $M$, the set $A$ has a partition into $\aleph_0$ different parts (e.g., by looking at $\min a$ for $a\in A$, which by genericity must obtain each possible value infinitely many times). After forcing with $\Col(A,\kappa)$ we added new partitions of size $\kappa$, without adding new sets of ordinals. This is in contrast to the results of Monro in \cite{Monro:1983}: in his model the generic partition is an infinite partition of $A$ which itself cannot be split into two infinite sets making it.
\begin{question}
In the previous section and in this one, the proofs involved in a fairly meaningful way the Cohen forcing itself. What happens when we consider a similar symmetric extension obtained by using a different kind of real numbers, e.g.\ random reals, Sacks reals, etc., or even a mixture of these? On its face it seems that the proof uses more of the fact that we take a finite-support product of infinitely many copies of the same forcing, rather than the specific properties of the Cohen forcing. To what extent can this be pushed?
\end{question}

\section{Adding a Cohen subset to a Dedekind-finite set}\label{sec:characterisations}

\autoref{cor:Adding a Cohen subset to Cohen's Dedekind finite set} shows that the Dedekind-finite set $A$ in Cohen's first model is still Dedekind-finite after forcing with $\Col(A,\kappa)$, and this leads to the problem of finding more general condition that ensure this. In this section, we provide characterisations, in $\ZF$, of those Dedekind-finite sets $A$ that remain Dedekind-finite after forcing with $\Add(A,1)=\Col(A,2)$. The combined results can already be stated in the following result, although some of the notions in the theorem are only defined below.

\begin{theorem}\label{thm:characterisations}
Let $A$ be a Dedekind-finite set. The following are equivalent:
\begin{enumerate-(1)}
\item \label{c:finite-subsets-is-DF}
$[A]^{<\omega}$ is Dedekind-finite.
\item \label{c:Add-has-finite-cc}
$\Add(A,1)$ contains no infinite antichains.
\item \label{c:Add-has-omega-cc}
$\Add(A,1)$ contains no countably infinite antichains.
\item \label{c:Add-has-finite-decision-p}
$\Add(A,1)$ has the finite decision property.
\item \label{c:Add-preserves-DF-of-A}
$A$ remains Dedekind-finite in any generic extension by $\Add(A,1)$.
\item \label{c:Add-does-not-collapse-A}
$A$ is not collapsed in any generic extension by $\Add(A,1)$.
\item \label{c:Add-does-not-add-Cohens}
$\Add(A,1)$ fails to add a Cohen real.
\item \label{c:Add-does-not-add-reals}
$\Add(A,1)$ fails to add a real.
\item \label{c:Add-does-not-add-sets-of-ord}
$\Add(A,1)$ fails to add a set of ordinals.
\item \label{c:Cantor-cube-is-ext-disc}
$2^A$ is extremally disconnected.
\end{enumerate-(1)}
\end{theorem}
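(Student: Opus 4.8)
The plan is to prove the ten conditions equivalent by isolating a combinatorial core, \ref{c:finite-subsets-is-DF}--\ref{c:Add-has-finite-decision-p}, then deriving the forcing statements \ref{c:Add-preserves-DF-of-A}--\ref{c:Add-does-not-add-sets-of-ord} from it, and finally recasting everything topologically as \ref{c:Cantor-cube-is-ext-disc}. Throughout I use that, since $A$ is Dedekind-finite, a condition $p\in\Add(A,1)$ is simply a finite partial function $A\to 2$; writing $\supp(p)=\dom p\in[A]^{<\omega}$, the map $p\mapsto\supp(p)$ has finite fibres (at most $2^{|\supp(p)|}$ conditions share a support), so $\Add(A,1)$ is Dedekind-finite exactly when $[A]^{<\omega}$ is.

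First the core. For \ref{c:finite-subsets-is-DF}$\Leftrightarrow$\ref{c:Add-has-omega-cc}: a countably infinite antichain must realise infinitely many distinct supports, which after deleting repetitions along the sequence yields an injection $\omega\to[A]^{<\omega}$; conversely, from an injection $n\mapsto s_n$ I pass to the strictly increasing finite unions $t_n=\bigcup_{k\le n}s_k$ and let $r_n$ be the condition that is $0$ on $t_{n-1}$ and $1$ on $t_n\setminus t_{n-1}$, which is canonical, uses no choices, and gives a countably infinite antichain. To upgrade \ref{c:Add-has-omega-cc} to \ref{c:Add-has-finite-cc} I prove in $\ZF$ two facts about an arbitrary antichain $B$: (i) if the support sizes $\{|\supp(p)|:p\in B\}$ are bounded then $B$ is finite; (ii) otherwise $[A]^{<\omega}$ is Dedekind-infinite. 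Fact (i) is an induction on the bound: fixing one $p_0\in B$, every other condition conflicts with $p_0$ on a nonempty subset of the finite, hence well-orderable, set $\supp(p_0)$, so by pigeonhole an infinite subfamily shares its trace there, and restricting that subfamily to $A\setminus\supp(p_0)$ produces an infinite antichain of strictly smaller support size; the recursion has finite depth, so only finitely many instantiations are made and no choice principle is invoked. For (ii), fact (i) shows each $B^{\le k}=\{p\in B:|\supp(p)|\le k\}$ is finite, so $B=\bigcup_k B^{\le k}$ is a canonical increasing union of finite sets, and $d_k=\bigcup\{\supp(p)\mid p\in B^{\le k+1}\setminus B^{\le k}\}$, for the infinitely many $k$ at which new conditions appear, gives the required injection into $[A]^{<\omega}$ after removing repetitions. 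This closes \ref{c:finite-subsets-is-DF}$\Leftrightarrow$\ref{c:Add-has-finite-cc}$\Leftrightarrow$\ref{c:Add-has-omega-cc}, and \ref{c:Add-has-finite-decision-p} enters as the assertion that every statement is decided by a finite predense set below any condition, which is visibly equivalent to all antichains being finite.

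Next the forcing block. The implications \ref{c:Add-does-not-add-sets-of-ord}$\Rightarrow$\ref{c:Add-does-not-add-reals}$\Rightarrow$\ref{c:Add-does-not-add-Cohens} are immediate, \ref{c:Add-does-not-add-reals}$\Rightarrow$\ref{c:Add-preserves-DF-of-A} repeats the proof of \autoref{cor:Adding a Cohen subset to Cohen's Dedekind finite set} (an injection $\omega\to A$ is coded by a real, which by hypothesis already lies in the ground model, contradicting Dedekind-finiteness), and \ref{c:Add-preserves-DF-of-A}$\Rightarrow$\ref{c:Add-does-not-collapse-A} holds because a Dedekind-finite set is never well-orderable. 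For the return journey I feed failures back into the core: when \ref{c:finite-subsets-is-DF} fails, the canonical antichain $\{r_n\}$ above lets the generic read off a new subset of $\omega$, indeed a Cohen real, breaking \ref{c:Add-does-not-add-Cohens} and \ref{c:Add-does-not-add-reals}; and a failure of \ref{c:Add-preserves-DF-of-A} or \ref{c:Add-does-not-collapse-A} introduces a genuinely new object from which the same support analysis recovers a countably infinite antichain, i.e.\ the negation of \ref{c:Add-has-omega-cc}.

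The crux, which I expect to be the main obstacle, is \ref{c:Add-has-finite-cc}/\ref{c:Add-has-finite-decision-p}$\Rightarrow$\ref{c:Add-does-not-add-sets-of-ord}: inferring that no new set of ordinals is added from the mere finiteness of antichains. In $\ZFC$ this is the trivial remark that the finite-antichain condition forces the completion to be finite, but that argument runs through an $\omega$-indexed splitting tree and so silently uses Dependent Choice, which is exactly what is unavailable here; indeed, as the Cohen model shows, $\Add(A,1)$ may carry finite antichains of every size while remaining atomless, so antichain \emph{cardinalities} are unbounded and cannot be used to bound the tree. I therefore plan to decide a name $\dot X$ for a subset of an ordinal directly: given $p$, I analyse how the Boolean values $\|\check\alpha\in\dot X\|$ sit in the completion and argue that finiteness of antichains, together with the finite-support structure of $\Add(A,1)$, forces only finitely many genuine splits relevant to $\dot X$ below $p$, so that some extension of $p$ pins $\dot X$ to a ground-model set. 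Finally \ref{c:Cantor-cube-is-ext-disc} goes through Stone duality: $2^A$ is the Stone space of the clopen algebra of $\Add(A,1)$, a Stone space is extremally disconnected exactly when that algebra is complete, and completeness is tied back to \ref{c:Add-has-omega-cc} by noting that the supremum of the canonical countable antichain is realised by a clopen set precisely when no such antichain exists. The secondary obstacle is that, without choice, $2^A$ need not be compact and its clopen sets need not be the finitely-supported ones, so this duality step must be carried out by hand rather than quoted.
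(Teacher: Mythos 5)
Your combinatorial core \ref{c:finite-subsets-is-DF}$\Leftrightarrow$\ref{c:Add-has-finite-cc}$\Leftrightarrow$\ref{c:Add-has-omega-cc} is sound, and the induction on the support-size bound with a pigeonhole on traces over $\supp(p_0)$ is a legitimate $\ZF$ substitute for the sunflower lemma that the paper uses (\autoref{lemma:sunflower}, \autoref{lemma:HKRR}). But there are genuine gaps elsewhere. First, the implication \ref{c:Add-does-not-add-reals}$\Rightarrow$\ref{c:Add-preserves-DF-of-A} via ``an injection $\omega\to A$ is coded by a real'' is false here: $A$ is an abstract Dedekind-finite set, not a set of reals, so an $\omega$-sequence of its elements is not coded by a subset of $\omega$. (That coding is exactly why \autoref{cor:Adding a Cohen subset to Cohen's Dedekind finite set} is stated only for the Cohen model, where $A\subseteq\RR$.) Second, and structurally fatal: every arrow you draw involving \ref{c:Add-preserves-DF-of-A} and \ref{c:Add-does-not-collapse-A} points \emph{into} them. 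You establish that the core implies them (via $\lnot$\ref{c:Add-preserves-DF-of-A}$\Rightarrow\lnot$\ref{c:Add-has-omega-cc} and its analogue for \ref{c:Add-does-not-collapse-A}), but never the converse: that if $[A]^{<\omega}$ is Dedekind-infinite then $\Add(A,1)$ makes $A$ Dedekind-infinite and collapses it. The paper's \autoref{lemma:DF-preservation} gets this from the density argument $f(n)=a$ whenever the generic subset meets $A_n$ in exactly $\{a\}$; without some such argument, \ref{c:Add-preserves-DF-of-A} and \ref{c:Add-does-not-collapse-A} are merely consequences of the other eight conditions, not equivalents. Moreover the direction you do claim, $\lnot$\ref{c:Add-preserves-DF-of-A}$\Rightarrow\lnot$\ref{c:Add-has-omega-cc}, is the hard one, and ``the same support analysis'' does not suffice: the paper needs a separate subclaim showing that for each $n$ only finitely many $a\in A$ admit a condition deciding $\dot f(\check n)=\check a$.

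Third, the step you rightly identify as the crux, core$\Rightarrow$\ref{c:Add-does-not-add-sets-of-ord}, remains a plan, and the plan omits the one idea that makes it work. Finiteness of antichains does not by itself yield ``only finitely many genuine splits relevant to $\dot X$'': the finite decision property gives, for each $\alpha<\eta$, a finite set $M^{\varphi(\check\alpha)}$ of minimal conditions deciding $\check\alpha\in\dot X$, but there are $\eta$ many values of $\alpha$. What saves the argument is that $\eta$ is an \emph{ordinal}: the finite sets $\bigcup\{\dom p\mid p\in M^{\varphi(\check\alpha)}\}$ are indexed by a well-order, so if their union were infinite one could canonically extract a disjoint sequence and contradict \ref{c:finite-subsets-is-DF}; hence the union is finite and $\dot X$ is decided by conditions supported in a fixed finite set, i.e.\ it names a ground-model set. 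This use of the well-ordering of the index set, which is precisely what distinguishes sets of ordinals from arbitrary sets, is absent from your sketch. The same pointwise application of the finite decision property is how the paper handles \ref{c:Cantor-cube-is-ext-disc} in \autoref{lemma:extremally disconnected}, avoiding the Stone-duality detour whose choice-free execution you correctly worry about. Finally, your remark that \ref{c:Add-has-finite-decision-p} is ``visibly equivalent'' to all antichains being finite is an overclaim: the paper's \autoref{lemma:finite decision property} requires a nontrivial argument (the subclaim that the common trace $q$ on a sunflower centre already forces $\varphi$), though your trace machinery could plausibly be adapted to prove it.
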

Some of these conditions (e.g., \ref{c:finite-subsets-is-DF}) already imply that $A$ is Dedekind-finite, but others do not (e.g., \ref{c:Add-does-not-add-Cohens} holds for $A=\omega_1$ when assuming $\ZFC$, and \ref{c:Add-does-not-collapse-A} holds for $A=\omega$ even in $\ZF$).

The forcing $\Add(A,1)$ was recently studied by Goldstern and Klausner in \cite{GoldsternKlausner} where the authors study the possible effects of this forcing on the structure of cardinals, as well as some specific properties of the forcing, e.g.\ condition \ref{c:Add-has-finite-cc} in our theorem, that occur when $A$ is assumed to be Dedekind-infinite with particular properties.

Our theorem admits an easy corollary, which is applicable to Cohen's first model.
\begin{corollary}
If $A$ is a Dedekind-finite set which can be linearly ordered, in particular a set of real numbers, then all the conditions of \autoref{thm:characterisations} hold.
\end{corollary}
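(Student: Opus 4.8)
The plan is to observe that, by \autoref{thm:characterisations}, it suffices to verify any single one of the ten equivalent conditions, and the most convenient choice is \ref{c:finite-subsets-is-DF}, namely that $[A]^{<\omega}$ is Dedekind-finite. This was essentially already remarked in \autoref{sec:prelim}, so the real work is only to turn that remark into a clean argument and then invoke the equivalences.

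I would fix a linear order $<$ on $A$ and use the key point that any \emph{finite} subset of $A$ is then well-ordered by $<$, hence carries a canonical increasing enumeration requiring no appeal to choice; this uniformity is what keeps the argument inside $\ZF$. Now suppose towards a contradiction that $[A]^{<\omega}$ is Dedekind-infinite, so there is an injection $n\mapsto a_n$ from $\omega$ into $[A]^{<\omega}$, giving pairwise distinct finite subsets of $A$. Let $B=\bigcup_{n<\omega}a_n$. First, $B$ must be infinite: were it finite, $[B]^{<\omega}$ would be finite, contradicting that the $a_n$ are distinct elements of $[B]^{<\omega}$. Next, using the canonical increasing enumeration of each $a_n$, list the elements of $B$ by concatenating these enumerations in order of the index $n$; this gives a surjection from $\omega$ onto $B$, and discarding repetitions (keeping for each element only its first occurrence) yields an injection of an initial segment of $\omega$ onto $B$. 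Since $B$ is infinite this initial segment is all of $\omega$, so $B$, and hence $A$, has a countably infinite subset, contradicting the Dedekind-finiteness of $A$. Therefore $[A]^{<\omega}$ is Dedekind-finite, condition \ref{c:finite-subsets-is-DF} holds, and \autoref{thm:characterisations} delivers all the remaining conditions.

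I do not expect a genuine obstacle here; the only point demanding care is that the argument stay within $\ZF$, and the single place where choice might intrude, the enumeration of the finite sets $a_n$, is exactly what the linear order removes, since a linear order on a finite set is a well-order with a definable increasing listing. The specialisation to a set of real numbers is then immediate, as $\RR$ is linearly ordered by its usual order.
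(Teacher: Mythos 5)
Your proof is correct and follows exactly the route the paper intends: the corollary is stated without a separate proof precisely because condition \ref{c:finite-subsets-is-DF} was already verified for linearly orderable Dedekind-finite sets in \autoref{sec:prelim} (a linear order gives a uniform, choice-free enumeration of each finite subset, so countably many distinct finite subsets would yield a countably infinite subset of $A$). Your write-up just fills in the details of that remark and then invokes \autoref{thm:characterisations}, which is all that is needed.
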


In the proofs, we will use the \textit{sunflower lemma}, a finite version of the $\Delta$-system lemma. Before we state the lemma, we fix the following notation. A \textit{sunflower} is a collection of sets, $S$, such that for some $t$, $u\cap v=t$ for all $u\neq v$ in $S$. Moreover, the set $t$ is called the \textit{centre} of the sunflower.

\begin{lemma}[Erd\H{o}s--Rado \cite{ErdosRado:1960}] \label{lemma:sunflower}
  If $a$ and $b$ are positive integers, then any collection of $b!ab+1$ sets of size ${\leq}b$ contains a sunflower of size ${>}a$.
\end{lemma}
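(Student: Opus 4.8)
The plan is to prove the lemma by induction on the size bound $b$, using the standard greedy dichotomy between a large pairwise-disjoint subfamily and a small ``hitting set'' followed by a pigeonhole count. It is convenient to let $f(a,b)$ denote the least integer such that every collection of more than $f(a,b)$ sets of size ${\leq}b$ contains a sunflower of size ${>}a$, and to show $f(a,b)\leq b!\,a^b$, i.e.\ that $b!\,a^b+1$ sets always suffice to force a sunflower of size ${>}a$.

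For the base case $b=1$, I would observe that any two distinct sets of size ${\leq}1$ meet in $\emptyset$, so \emph{any} collection of distinct sets of size ${\leq}1$ is automatically a sunflower with empty centre. Hence $a+1$ such sets already form a sunflower of size ${>}a$, giving $f(a,1)=a=1!\,a^1$.

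For the inductive step, suppose the bound holds for $b-1$ and let $\mathcal{F}$ be a collection of more than $b!\,a^b$ sets of size ${\leq}b$. I would choose a maximal pairwise-disjoint subfamily $D_1,\dots,D_m\in\mathcal{F}$. If $m>a$, then $\{D_1,\dots,D_m\}$ is itself a sunflower of size ${>}a$ with empty centre and we are done. Otherwise $m\leq a$, and setting $Y=D_1\cup\dots\cup D_m$ we have $|Y|\leq ab$. By maximality every member of $\mathcal{F}$ meets $Y$, so by pigeonhole some fixed $y\in Y$ lies in more than $\tfrac{b!\,a^b}{ab}=(b-1)!\,a^{b-1}$ members of $\mathcal{F}$. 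Deleting $y$ from each of these (still distinct) sets yields more than $(b-1)!\,a^{b-1}$ sets of size ${\leq}b-1$; by the induction hypothesis they contain a sunflower of size ${>}a$ with some centre $t$, and re-inserting $y$ turns it into a sunflower of the same size with centre $t\cup\{y\}$ inside $\mathcal{F}$. This closes the recursion $f(a,b)=ab\cdot f(a,b-1)$, whence $f(a,b)\leq b!\,a^b$.

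The only genuinely delicate points are the bookkeeping in the pigeonhole step — one must verify that the sets from which $y$ is removed remain distinct (they do, as they were distinct and all contained $y$) and that re-adjoining the common element $y$ preserves the sunflower property by simply enlarging the centre — together with handling the empty set correctly in the base case and the empty-centre sunflower in the $m>a$ branch. I expect the main conceptual obstacle to be arranging the induction so that the factor picked up at each step is exactly $ab$, which is precisely what makes the final bound collapse to $b!\,a^b$; once the recursion is set up this way, everything else is routine counting.
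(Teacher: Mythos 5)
You are in an unusual situation here: the paper does not prove this lemma at all. It is quoted as a known theorem of Erd\H{o}s and Rado \cite{ErdosRado:1960}, with only the remark that, being a statement about finite sets, it is provable in $\ZF$. So there is no proof in the paper to compare yours against. Your argument is the classical Erd\H{o}s--Rado induction (extract a maximal pairwise-disjoint subfamily, either it is already a large sunflower with empty centre, or its union $Y$ is small and meets every member, so pigeonhole gives a popular point $y$, delete $y$ and recurse), and it is essentially correct; it also uses only finite combinatorics --- finite pigeonhole and induction on $b$ --- so it is indeed choice-free, as the paper's intended use requires.

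Two caveats. First, what you prove is the bound $b!\,a^b+1$, which is not what the statement literally asserts: as printed, the lemma says $b!ab+1$, i.e.\ $b!\cdot a\cdot b+1$ sets, and that assertion is false in general. For example, take $b$ pairwise disjoint blocks of size $a$ and let $\mathcal{F}$ be the $a^b$ transversals (one point from each block); any sunflower in $\mathcal{F}$ of size at least $2$ must have all petals pairwise distinct inside some block, so no sunflower has size $>a$, while $a^b>b!\,ab+1$ already for $b=2$, $a=5$. So the printed bound must be read as the classical $b!\,a^b+1$, which is exactly what your recursion $f(a,b)\le ab\cdot f(a,b-1)$, $f(a,1)\le a$ delivers; for the paper's applications this is immaterial, since all that is ever used is that an infinite family of sets of bounded size contains arbitrarily large sunflowers. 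Second, a gap you half-flagged but did not close: since members may have size $<b$, the empty set can belong to $\mathcal{F}$, and then ``by maximality every member of $\mathcal{F}$ meets $Y$'' is false. The patch is short: if $\varnothing\in\mathcal{F}$, maximality forces $\varnothing\in\{D_1,\dots,D_m\}$, so $Y$ is a union of at most $m-1\le a-1$ sets and $|Y|\le(a-1)b$; applying the pigeonhole to the at least $b!\,a^b$ nonempty members still yields a point lying in strictly more than $(b-1)!\,a^{b-1}$ of them, and the induction proceeds as before. Note also that the inductive hypothesis must allow the empty set to occur (deleting $y$ from a member $\{y\}$ creates it), which your formulation ``sets of size ${\leq}\,b-1$'' does permit.
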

The above lemma, involving only finite sets, is of course provable without the Axiom of Choice. Less obvious, though, is that the following lemma can also be proved in $\ZF$, as was done by Keremedis, Howard, Rubin, and Rubin in \cite{HKRR:1998}.\footnote{We can actually use this lemma instead of the Erd\H{o}s--Rado lemma in the arguments in this paper.}
\begin{lemma}[Lemma~4 in \cite{HKRR:1998}]\label{lemma:HKRR}
If $X$ is an infinite collection of sets of size $b$, for a natural number $b>1$, then there is a finite $t\subseteq\bigcup X$ such that for every positive number $a$, there is a subset of $X$ of size $a$ which is a sunflower with centre $t$.
\end{lemma}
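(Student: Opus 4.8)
The plan is to prove the statement by induction on $b$, constructing the centre $t$ explicitly at each stage rather than invoking \autoref{lemma:sunflower} directly. The point is that the Erd\H{o}s--Rado lemma only yields, for each $a$, \emph{some} sunflower of size $a$, whose centre may depend on $a$; the content here is that a \emph{single} finite $t$ works simultaneously for all $a$, and the induction is what pins this uniform centre down. Since $X$ need not be well-orderable, the whole argument must be carried out without the Axiom of Choice, so I would use only single existential instantiations (choosing one object at a time, which is pure logic) together with the $\ZF$ fact that a finite union of finite sets is finite.

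For the base case $b=1$ the members of $X$ are singletons, any two distinct ones are disjoint, so $t=\emptyset$ works and every finite subfamily is a sunflower with centre $\emptyset$. For the inductive step with $b\geq 2$ I would split on the following dichotomy applied to $U=\bigcup X$: either (Case 1) some $x\in U$ belongs to infinitely many members of $X$, or (Case 2) every $x\in U$ belongs to only finitely many members. This is a genuine disjunction requiring no choice, and in each branch only a single witness is instantiated.

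In Case 1, fix one $x$ lying in infinitely many members, set $X_x=\{u\in X\mid x\in u\}$, and pass to $X_x'=\{u\setminus\{x\}\mid u\in X_x\}$. The map $u\mapsto u\setminus\{x\}$ is a bijection $X_x\to X_x'$ (injectivity uses $x\in u$), so $X_x'$ is an infinite collection of sets of size $b-1$ with $x\notin\bigcup X_x'$. By the induction hypothesis there is a finite $t'\subseteq\bigcup X_x'$ giving, for every $a$, a sunflower of size $a$ in $X_x'$ with centre $t'$; adding $x$ back to each petal converts it into a sunflower of size $a$ inside $X_x\subseteq X$ with centre $t'\cup\{x\}$, since $(v\cup\{x\})\cap(w\cup\{x\})=(v\cap w)\cup\{x\}$ and $x\notin t'$. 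Thus $t=t'\cup\{x\}$ is a finite subset of $\bigcup X$ of the required kind.

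In Case 2 I claim $t=\emptyset$ works, i.e.\ $X$ has arbitrarily large pairwise disjoint subfamilies. Suppose not; then the positive integers $a$ admitting a pairwise disjoint subfamily of size $a$ form a nonempty downward-closed bounded set, with some maximum $m$, witnessed by a single finite pairwise disjoint family $\{u_1,\dots,u_m\}$. By maximality every member of $X$ meets $S=u_1\cup\dots\cup u_m$; but $S$ is finite, and by the Case 2 hypothesis each point of $S$ lies in only finitely many members, so only finitely many members of $X$ meet $S$, forcing $X$ to be finite --- a contradiction. I expect this Case 2 step to be the main obstacle, precisely because it is where the absence of choice must be handled with care: the arbitrarily large disjoint subfamilies are obtained not by a choice-dependent greedy construction but from a maximality-and-counting argument that is valid in $\ZF$.
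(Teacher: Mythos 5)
Your proof is correct, but note that the paper does not actually prove this statement: it is imported verbatim as Lemma~4 of \cite{HKRR:1998}, so there is no in-paper argument to compare against. What you have written is a self-contained $\ZF$ proof, and it is essentially the standard choiceless proof of the finite $\Delta$-system lemma with a uniform centre: induct on $b$, split according to whether some point lies in infinitely many members, shrink through that point in the first case, and in the second case extract arbitrarily large pairwise disjoint subfamilies via a maximal disjoint subfamily rather than a greedy (choice-dependent) construction. The two delicate points are handled correctly: in Case~1 the map $u\mapsto u\setminus\{x\}$ really is a bijection on $X_x$ (so $X_x'$ is infinite and the induction hypothesis applies, and $x\notin t'$ guarantees the new centre is $t'\cup\{x\}$), and in Case~2 the finiteness of $X$ follows from a finite union of finite sets, which needs no choice. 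You correctly identify why this is worth proving at all rather than just quoting \autoref{lemma:sunflower}: Erd\H{o}s--Rado gives a sunflower of each size with an $a$-dependent centre, whereas the applications in Section~6 (e.g.\ the proof of \autoref{lemma:finite decision property}) need one finite $t$ serving all $a$ simultaneously, which is exactly what your induction pins down. The only cosmetic remark is that your induction necessarily proves the (trivial) case $b=1$ as well, even though the statement as quoted assumes $b>1$; that is a strengthening, not a gap.
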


It is easy to see that $[A]^{<\omega}$ is Dedekind-infinite if and only if there is a sequence $\vec{A}=\tup{A_n\mid n<\omega}$ of disjoint nonempty finite subsets of $A$. We will call such a sequence a \textit{disjoint sequence}.
We fix some more notation: for any $K\subseteq\Add(A,1)$ we define $\supp^K=\{\dom p\mid p\in K\}$. Note that $K$ is finite if and only if $\supp^K$ is finite.

\begin{lemma} \label{lemma:characters-of-finite-cc}
The following are equivalent:
\begin{enumerate-(a)}
\item \label{fcc:countable-antichain}
$\Add(A,1)$ contains a countably infinite antichain.
\item \label{fcc:infinite-antichain}
$\Add(A,1)$ contains an infinite antichain.
\item \label{fcc:finite-sets-is-DF}
$[A]^{<\omega}$ is Dedekind-infinite.
\end{enumerate-(a)}
\end{lemma}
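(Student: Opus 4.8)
The plan is to prove the cycle \ref{fcc:finite-sets-is-DF}$\Rightarrow$\ref{fcc:countable-antichain}$\Rightarrow$\ref{fcc:infinite-antichain}$\Rightarrow$\ref{fcc:finite-sets-is-DF}. The step \ref{fcc:countable-antichain}$\Rightarrow$\ref{fcc:infinite-antichain} is immediate, as a countably infinite antichain is in particular infinite, so all the work sits in the other two implications.

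For \ref{fcc:finite-sets-is-DF}$\Rightarrow$\ref{fcc:countable-antichain} I would give an explicit construction. Fix a disjoint sequence $\tup{A_n\mid n<\omega}$ and for each $n$ let $p_n\in\Add(A,1)$ be the condition with domain $A_0\cup\dots\cup A_n$ taking value $1$ on $A_n$ and value $0$ on $A_0\cup\dots\cup A_{n-1}$; this is well defined since the $A_i$ are disjoint. For $n<m$ the block $A_n$ is nonempty and lies in the common domain $\dom p_n\cap\dom p_m$, where $p_n$ is constantly $1$ and $p_m$ is constantly $0$; hence $p_n$ and $p_m$ are incompatible. The domains strictly increase, so $n\mapsto p_n$ is injective, and $\{p_n\mid n<\omega\}$ is a countably infinite antichain. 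The whole construction is canonical in $\tup{A_n}$, so no choice is used.

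The substantial implication is \ref{fcc:infinite-antichain}$\Rightarrow$\ref{fcc:finite-sets-is-DF}. Let $K$ be an infinite antichain, so that $\supp^K$ is infinite. The key lemma I would isolate is that for every $n$ the layer $F_n=\{d\in\supp^K\mid|d|\le n\}$ is \emph{finite}. Granting this, $V_n=\bigcup F_n$ is a finite subset of $A$ (a finite union of finite sets), the $V_n$ increase with $n$, and $\bigcup_n V_n=\bigcup\supp^K$; this last union is infinite, because $\supp^K$ is infinite while all its layers are finite, forcing domains of arbitrarily large size. Then the differences $V_{n+1}\setminus V_n$ are pairwise disjoint finite subsets of $A$, infinitely many of them nonempty, and discarding the empty ones while reindexing along the canonically ordered infinite set $\{n\mid V_{n+1}\setminus V_n\neq\emptyset\}$ yields a disjoint sequence, witnessing \ref{fcc:finite-sets-is-DF}. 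Crucially, every object here is defined canonically from $K$, so this passage from ``an infinite collection'' to ``an $\omega$-indexed disjoint sequence'' avoids any appeal to choice — which is exactly the point at which naive constructions in $\ZF$ break down.

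The main obstacle, and the only place genuine combinatorics enters, is proving that each layer $F_n$ is finite. I would argue by contradiction: if $F_n$ is infinite then, as all its members have size ${\le}n$, it contains a subcollection of size $n!\cdot 2^n\cdot n+1$, and \autoref{lemma:sunflower} produces a sunflower $Z\subseteq F_n$ of size ${>}2^n$ with some centre $t$ of size $|t|\le n$. For each $d\in Z$ choose (finitely many choices, so legitimate in $\ZF$) a condition $p_d\in K$ with $\dom p_d=d$. There are at most $2^{|t|}\le 2^n<|Z|$ possible restrictions $p_d\restriction t$, so by pigeonhole two distinct $d_1,d_2\in Z$ satisfy $p_{d_1}\restriction t=p_{d_2}\restriction t$; but $d_1\cap d_2=t$ by the sunflower property, so $p_{d_1}$ and $p_{d_2}$ agree on their common domain and are therefore compatible, contradicting that $K$ is an antichain. (The same step could be run through \autoref{lemma:HKRR} instead.) I expect the two points needing most care to be the bookkeeping in this pigeonhole-on-the-centre argument and the verification that the final extraction of the disjoint sequence is genuinely choice-free.
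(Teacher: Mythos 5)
Your proposal is correct and follows essentially the same route as the paper: the explicit antichain $\{p_n\mid n<\omega\}$ built from a disjoint sequence gives \ref{fcc:finite-sets-is-DF}$\Rightarrow$\ref{fcc:countable-antichain}, and \autoref{lemma:sunflower} together with a pigeonhole on the restrictions to the centre gives \ref{fcc:infinite-antichain}$\Rightarrow$\ref{fcc:finite-sets-is-DF}. The only differences are presentational: you stratify $\supp^K$ cumulatively by $|d|\le n$ rather than splitting the antichain by exact domain size, and you spell out the choice-free extraction of the disjoint sequence and the pigeonhole bound $2^{|t|}$, both of which the paper leaves implicit.
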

\begin{proof}
\ref{fcc:countable-antichain} $\implies$ \ref{fcc:infinite-antichain} is clear.

\ref{fcc:infinite-antichain} $\implies$ \ref{fcc:finite-sets-is-DF}: Suppose that $\Add(A,1)$ contains an infinite antichain $C$. We show that $C_k=\{p\in C\mid |\dom p|=k\}$ is finite for all $k<\omega$. It then follows that $D=\bigcup_{k<\omega} \supp^{C_k}$ is an infinite union of finite sets and hence $[A]^{<\omega}$ is Dedekind-infinite.
Towards a contradiction, suppose that $C_k$ is infinite for some $k<\omega$. Then $\supp^{C_k}$ contains arbitrarily large sunflowers by \autoref{lemma:sunflower}. Since their centres are all of size at most $k$, we can find two compatible conditions in $C_k$, contradicting the assumption that $C$ is an antichain.

\ref{fcc:finite-sets-is-DF} $\implies$ \ref{fcc:countable-antichain}: If $[A]^{<\omega}$ is Dedekind-infinite, let $\vec{A}=\tup{A_n\mid n<\omega}$ be a disjoint sequence and $B_n=\bigcup_{i\leq n} A_i$. Define $p_0\colon B_0\to\{0,1\}$ to take the value $1$ on $B_0$, and for $n\in\omega\setminus\{0\}$, define $p_n\colon B_n\to \{0,1\}$ to take value $1$ on $A_n$ and $0$ on $B_{n-1}$. Then $C=\{p_n\mid n<\omega\}$ is a countably infinite antichain.
\end{proof}

The equivalence of \ref{fcc:countable-antichain} and \ref{fcc:finite-sets-is-DF} was independently proved recently by Keremedis and Tachtsis, see Lemma~1 in \cite{KeremedisTachtsis:2020}. They prove in Theorem~3 a further equivalence in terms of the topological \textit{cellularity} of the space $2^A$.

\begin{definition}
We say that $\Add(A,1)$ has the \textit{finite decision property} if for all formulas $\varphi(\dot x)$, the set $M^{\varphi(\dot x)}$ of minimal elements of $N^{\varphi(\dot x)}=\{p\mid p\forces\varphi(\dot x)\}$ with respect to restriction is finite.
\end{definition}

\begin{lemma}\label{lemma:finite decision property}
The following are equivalent.\footnote{One can easily formulate this lemma and its proof without using forcing. Then $N^{\varphi(\dot x)}$ is replaced by any subset $N$ of $\Add(A,1)$ with the following property: if $M$ is a subset of $N$ that is dense below some $p\in N$, then $p\in M$. }
\begin{enumerate-(a)}
\item \label{fdp:Add-has-fdp}
$\Add(A,1)$ has the finite decision property.
\item \label{fdp:finite-subsets-is-DF}
$[A]^{<\omega}$ is Dedekind-finite.
\end{enumerate-(a)}
\end{lemma}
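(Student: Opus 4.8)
The plan is to prove the two implications separately, using \autoref{lemma:characters-of-finite-cc} as the bridge: condition \ref{fdp:finite-subsets-is-DF} that $[A]^{<\omega}$ is Dedekind-finite is the negation of condition \ref{fcc:finite-sets-is-DF} in that lemma, hence equivalent (by \ref{lemma:characters-of-finite-cc}) to $\Add(A,1)$ having no infinite antichain. So the real content is relating the finite decision property to the absence of infinite antichains.

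For the direction \ref{fdp:finite-subsets-is-DF} $\implies$ \ref{fdp:Add-has-fdp}, I would argue by contraposition. Suppose some formula $\varphi(\dot x)$ has an infinite set $M^{\varphi(\dot x)}$ of minimal elements of $N^{\varphi(\dot x)}$. The crucial observation is that distinct minimal elements must be pairwise \emph{incompatible}: if $p,q$ are both minimal in $N^{\varphi(\dot x)}$ and compatible, then a common extension $r$ forces $\varphi(\dot x)$ (as $N^{\varphi(\dot x)}$ is closed upward under the forcing relation in the relevant sense), and by minimality neither $p$ nor $q$ can strictly extend the other, so by examining restrictions one derives $p=q$; I would make this precise using that minimal elements with respect to restriction that lie below a common condition must coincide. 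Thus $M^{\varphi(\dot x)}$ is an infinite antichain, and by \autoref{lemma:characters-of-finite-cc} (\ref{fcc:infinite-antichain} $\implies$ \ref{fcc:finite-sets-is-DF}) we conclude $[A]^{<\omega}$ is Dedekind-infinite.

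For the converse \ref{fdp:Add-has-fdp} $\implies$ \ref{fdp:finite-subsets-is-DF}, again by contraposition: assume $[A]^{<\omega}$ is Dedekind-infinite, so by \autoref{lemma:characters-of-finite-cc} there is a countably infinite antichain $C=\{p_n\mid n<\omega\}\subseteq\Add(A,1)$, indeed the explicit one built from a disjoint sequence in the proof of that lemma. The task is to manufacture a formula $\varphi(\dot x)$ whose minimal forcing conditions are infinite. The natural choice is to let $\dot x$ name the generic subset of $A$ (the canonical name coded by the generic filter) and take $\varphi$ to assert that $\dot x$ \emph{extends one of the $p_n$}, or more robustly, that the generic meets the antichain $C$ in a way pinned down by the specific $p_n$. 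Since the $p_n$ are pairwise incompatible and each $p_n$ forces the corresponding instance while no proper restriction of $p_n$ does (this is where one uses the concrete shape of the $p_n$, namely that removing any coordinate destroys the distinguishing information), the set of minimal deciding conditions is exactly $\{p_n\mid n<\omega\}$, which is infinite.

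The main obstacle I expect is the converse direction: one must exhibit a single first-order formula $\varphi(\dot x)$ in the forcing language, using only a name definable without choice, whose minimal-decision set is genuinely infinite. The delicacy is ensuring each $p_n$ is truly minimal---that no strict restriction already decides $\varphi$---which forces me to choose the $p_n$ and the formula so that the deciding information is irredundantly spread across the whole domain of each condition; the disjoint-sequence antichain from \autoref{lemma:characters-of-finite-cc}, where $p_n$ records value $1$ on $A_n$ and $0$ below, is well-suited because its coordinates are individually necessary to separate $p_n$ from the other conditions. The footnote's reformulation (replacing $N^{\varphi(\dot x)}$ by an abstract subset $N$ closed under the stated density condition) suggests that the cleanest writeup may bypass the forcing language entirely and work combinatorially with such an $N$, which would sidestep the naming subtleties altogether.
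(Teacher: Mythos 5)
There is a genuine gap in your argument for the direction \ref{fdp:finite-subsets-is-DF} $\implies$ \ref{fdp:Add-has-fdp}. Your plan rests on the claim that distinct elements of $M^{\varphi(\dot x)}$ must be pairwise incompatible, so that an infinite $M^{\varphi(\dot x)}$ yields an infinite antichain and you can quote \autoref{lemma:characters-of-finite-cc}. This claim is false: minimality is taken with respect to the restriction order, and two distinct minimal elements of $N^{\varphi(\dot x)}$ can perfectly well have a common extension. Concretely, fix distinct $a,b\in A$ and let $\varphi$ assert that the generic function takes the value $1$ at $a$ or at $b$; then $\{(a,1)\}$ and $\{(b,1)\}$ are both minimal in $N^{\varphi}$ (their only proper restriction is $\1$, which does not force $\varphi$), yet they are compatible. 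Your sketched justification --- ``minimal elements with respect to restriction that lie below a common condition must coincide'' --- is exactly the false step: in a poset two minimal elements of a subset can certainly lie below a common element without being equal. Since $M^{\varphi(\dot x)}$ need not be an antichain, the finite decision property does not reduce to the absence of infinite antichains in the way you propose, and this is precisely why the paper's proof of this direction does not go through \autoref{lemma:characters-of-finite-cc}. Instead it argues directly: assuming some $M^{\varphi(\dot x)}_k$ is infinite, it applies the choiceless sunflower lemma (\autoref{lemma:HKRR}) to $\supp^{M^{\varphi(\dot x)}_k}$ to find a finite centre $t$ and a single restriction $q$ to $t$ shared by arbitrarily large sunflower-supported subfamilies; minimality forces $q\nforces\varphi(\dot x)$, while the sunflower structure shows every $r\leq q$ is compatible with some member of the family, whence $q\forces\varphi(\dot x)$ --- a contradiction. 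That sunflower step is the missing idea in your proposal.

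Your other direction is essentially sound in spirit and matches the paper's strategy (produce, from a disjoint sequence, a formula with infinitely many minimal deciding conditions), but your specific choice of $\varphi$ (``the generic extends one of the $p_n$'') has the minimality defect you yourself flag: for instance, if $|A_0|=1$, the restriction of $p_n$ obtained by deleting the $A_0$-coordinate already forces $\varphi$, since every generic extending it must extend either $p_0$ or $p_n$. The paper avoids this by letting $\dot x$ name the least $n$ with $A_n$ contained in the generic set and taking $\varphi(\dot x)$ to say that $\dot x$ is even; you would need a similar repair, or at least to arrange $|A_n|\geq 2$ and argue minimality carefully.
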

\begin{proof}
\ref{fdp:Add-has-fdp} $\implies$ \ref{fdp:finite-subsets-is-DF}: Suppose that $[A]^{<\omega}$ is Dedekind-infinite, and let $\tup{A_n\mid n<\omega}$ be a disjoint sequence witnessing that. Let $\dot x$ be the $\Add(A,1)$-name for the least $n$ such that the canonical generic subset of $A$ contains $A_n$. Let $\varphi(\dot x)$ denote a formula stating that $\dot x$ is even. It is then easy to see that $M^{\varphi(\dot x)}$ is infinite.

\ref{fdp:finite-subsets-is-DF} $\implies$ \ref{fdp:Add-has-fdp}: Suppose that $\varphi(\dot x)$ is a formula such that $M^{\varphi(\dot x)}$ is infinite. We can assume that $M^{\varphi(\dot x)}_k=\{p\in M^{\varphi(\dot x)}\mid |\dom p|=k\}$ is infinite for some $k<\omega$, since one can otherwise construct a disjoint sequence. Let $M=M^{\varphi(\dot x)}_k$. Since $M$ is infinite, $\supp^M$ is infinite as well. By \autoref{lemma:HKRR}, there is some finite $t\subseteq\bigcup\supp^M$ which is the centre of arbitrarily large sunflowers in $\supp^M$, fix such $t$.

There are only finitely many possible values that a condition can take on $t$, in fact at most $3^k$, where the third value stands for `undefined'. Therefore, there is a condition $q$ with the property: there are arbitrarily large subsets $K$ of $M$ such that $\supp^K$ forms a sunflower with centre $t$ and $p\restriction t=q$ for all $p\in K$. Since $q$ is a proper subset of $p$ if $|K|>1$, it must be that $q\nforces\varphi(\dot x)$, by the minimality of $p$.

\begin{subclaim}
$q\forces\varphi(\dot x)$.
\end{subclaim}
\begin{proof}
Otherwise take some $r\leq q$ with $r\forces\lnot\varphi(\dot x)$. Then $r$ is incompatible with all elements of $M$. Moreover, take a subset $K$ of $M$ as above with $|K|>|r|$. Since $\supp^K$ forms a sunflower with centre $t$ and $|K|>|r|$, there is some $s\in K$ with $\dom r\cap\dom s=t$. We further have $s\restriction t=q$ by the choice of $K$.  Therefore $r$ and $s$ are compatible. But this contradicts the fact that $r$ and $s$ force opposite truth values of $\varphi(\dot x)$.
\end{proof}

But this is a contradiction, as $q\nforces\varphi(\dot x)$. Therefore $M_k^{\varphi(\dot x)}$ is finite for all $k$, and we can construct a disjoint subset as wanted.
\end{proof}

\begin{definition}
A set $A$ is \textit{collapsed} in an outer model $W$ of $V$ if there is a subset of $A$, $B\in V$, such that $V\models|B|<|A|$, but $W\models|B|=|A|$.
\end{definition}

\begin{lemma}\label{lemma:DF-preservation}
Let $A$ be a Dedekind-finite set. The following are equivalent.
\begin{enumerate-(a)}
\item \label{df:finite-subsets-is-DF}
$[A]^{<\omega}$ is Dedekind-infinite.
\item \label{df:Add-makes-A-DInf}
$A$ is Dedekind-infinite in any generic extension by $\Add(A,1)$.
\item \label{df:Add-collapses-A}
$A$ is collapsed in any generic extension by $\Add(A,1)$.
\end{enumerate-(a)}
\end{lemma}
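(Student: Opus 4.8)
The plan is to run the cycle \ref{df:finite-subsets-is-DF}$\Rightarrow$\ref{df:Add-makes-A-DInf}, prove the soft equivalence \ref{df:Add-makes-A-DInf}$\Leftrightarrow$\ref{df:Add-collapses-A}, and then close it by proving the contrapositive of \ref{df:Add-makes-A-DInf}$\Rightarrow$\ref{df:finite-subsets-is-DF} in the strong form that if $[A]^{<\omega}$ is Dedekind-finite then $A$ stays Dedekind-finite in \emph{every} $\Add(A,1)$-extension. For \ref{df:finite-subsets-is-DF}$\Rightarrow$\ref{df:Add-makes-A-DInf}, fix a disjoint sequence $\tup{A_n\mid n<\omega}$ and let $g\colon A\to 2$ be the generic function. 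For each $N$ the set of conditions forcing, for some $n>N$, that $g\restriction A_n$ takes the value $1$ exactly once is dense and lies in $V$ (extend any condition on a fresh $A_n$); so generically $S=\{n: g\restriction A_n\text{ is }1\text{ at a single point}\}$ is infinite, and mapping the $k$-th element of $S$ to the unique $1$-point of the corresponding $A_n$ gives an injection $\omega\to A$, so $A$ is Dedekind-infinite in the extension.

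The equivalence \ref{df:Add-makes-A-DInf}$\Leftrightarrow$\ref{df:Add-collapses-A} uses only that every element of $A$ already lies in $V$. If $A$ is Dedekind-infinite in $V[G]$, pick any $a_0\in A$; then $B=A\setminus\{a_0\}\in V$ is a proper subset, so $|B|<|A|$ in $V$ as $A$ is Dedekind-finite, while a Hilbert-hotel shift along a countable subset of $A$ through $a_0$ witnesses $|B|=|A|$ in $V[G]$, so $A$ is collapsed. Conversely, if $A$ is collapsed by some $B\subsetneq A$ from $V$, then $V[G]$ contains a bijection of $A$ with the proper subset $B$, which is precisely Dedekind-infiniteness of $A$.

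It remains to prove the preservation statement. I will use two facts: by \autoref{lemma:finite decision property} the forcing has the finite decision property, so for each $a\in A$ the restriction-minimal conditions forcing a fixed value are finite in number; and $(\star)$ every $\omega$-indexed sequence of finite subsets of $A$ has finite range, since an infinite range would be a countably infinite subset of $[A]^{<\omega}$. The crux is a \emph{Confinement} claim: for any name $\dot y$ and any $p\forces\dot y\in\check A$, the set $F$ of possible values of $\dot y$ below $p$ is finite. Granting this, suppose $p\forces\dot f\colon\check\omega\to\check A$ is injective; then each set $F_n$ of possible values of $\dot f(\check n)$ is finite, so by $(\star)$ the union $\bigcup_n F_n$ is finite, contradicting that it contains the infinite range of $\dot f$. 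Hence no name is forced to be an injection, $A$ remains Dedekind-finite, and a Dedekind-finite set admits no bijection with a proper subset, so $A$ is not collapsed — yielding the failure of \ref{df:Add-makes-A-DInf} and \ref{df:Add-collapses-A} and closing the cycle.

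For Confinement, assume $F$ is infinite. Splitting $F$ by the least domain-size of a minimal decider and applying $(\star)$ to the resulting $\omega$-sequence of subsets of $A$, I may fix a single $k$ for which infinitely many values carry a minimal decider of domain-size exactly $k$. Conditions forcing distinct values are incompatible, hence conflict on the intersection of their domains; feeding this infinite family of $k$-element domains into \autoref{lemma:HKRR} yields a fixed centre admitting arbitrarily large sunflowers. Since two such conditions that agree on the centre and have disjoint petals are compatible and so force a common value, the finiteness of each value-fibre (finite decision property) rules out an empty refinement of the centre and produces a \emph{nonempty} enlargement of it; replacing the centre by this enlargement and repeating, the centre grows strictly while staying inside a domain of size $k$, which can occur at most $k$ times, contradicting that the process persists as long as infinitely many values survive. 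The main obstacle is exactly this $\ZF$ argument: over $\ZFC$ one picks one decider per value to form an infinite antichain and contradicts \autoref{lemma:characters-of-finite-cc}, and it is the sunflower descent — bounded by the fixed decider-size and kept alive by the finiteness of value-fibres — that replaces the missing choice function.
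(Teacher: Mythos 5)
Your cycle of implications and the two ``soft'' parts are essentially the paper's: \ref{df:finite-subsets-is-DF}$\Rightarrow$\ref{df:Add-makes-A-DInf} via the density argument on a disjoint sequence, and \ref{df:Add-makes-A-DInf}$\Leftrightarrow$\ref{df:Add-collapses-A} via $A\setminus\{a_0\}$, are both correct. Your observation $(\star)$ is also a nice streamlining of the paper's repeated ``construct a disjoint sequence'' steps, and the reduction of the hard direction to the Confinement claim (which is the paper's subclaim that $\{a\in A\mid s^n_a\neq\varnothing\}$ is finite) is sound.

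The gap is in the last step of your proof of Confinement. You feed the family of \emph{individual} $k$-element domains of minimal deciders into \autoref{lemma:HKRR}. But a single value $a$ may contribute many petals to a sunflower: the finite decision property only says each fibre $M_a$ is finite, it gives no \emph{uniform} bound on $|M_a|$ over the infinitely many surviving values. So a sunflower of size $m$ with centre $t$ splits into at most $2^{|t|}$ trace-classes, each class forces a single common value, and all you can conclude is $m\leq\sum_a|M_a|$ over at most $2^{|t|}\cdot 2^{k}$ values --- a finite bound that depends on \emph{which} values appear and therefore grows with $m$. Your ``nonempty enlargement of the centre, repeated at most $k$ times'' is meant to break this circularity, but as described it is not an argument I can verify: at every stage the obstruction is the same (one value with a large decider-fibre can monopolise the sunflower), and nothing in the finite decision property stops $\sup_a|M_a|=\infty$. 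The fix, which is what the paper does, is to assign to each value $a$ the \emph{single} finite set $e_a=\bigcup\supp^{M_a}$ (the map $a\mapsto e_a$ is finite-to-one, so the family $\{e_a\}$ is still infinite and, after fixing a size class, consists of $k$-element sets). Now petals correspond bijectively to values: in a sunflower of size $3^{k}+1$ with centre $c$, any two deciders for distinct values meet only inside $c$ and must \emph{conflict} there, so choosing one decider per petal (a finite choice) yields $3^{k}+1$ pairwise incompatible partial functions on $c$, of which there are at most $3^{|c|}\leq 3^{k}$ --- a genuine contradiction that needs no uniform bound on the fibres. With that substitution your proof closes; without it, the Confinement claim is not established.
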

\begin{proof}
\ref{df:finite-subsets-is-DF} $\implies$ \ref{df:Add-makes-A-DInf}: Assume that $[A]^{<\omega}$ is Dedekind-infinite, and let $\tup{A_n\mid n<\omega}$ be a disjoint sequence witnessing that. Suppose that $G\subseteq\Add(A,1)$ is $V$-generic subset of $A$, and let $B$ be $\{a\mid\exists p\in G, p(a)=1\}$. Define $f(n)=a$ whenever $B\cap A_n=\{a\}$, which by a density argument happens infinitely often. This defines an injection from an infinite subset of $\omega$ into $A$, and therefore $A$ is Dedekind-infinite.

\ref{df:Add-makes-A-DInf} $\implies$ \ref{df:Add-collapses-A}: Suppose that $A$ is Dedekind-infinite in the generic extension, and let $f\colon A\to A$ be an injection which is not a bijection, then there is some $a\in A$ such that $f(x)\neq a$ for all $x\in A$. In $V$, by Dedekind-finiteness of $A$ there, the set $A\setminus\{a\}$ is strictly smaller in size, and thus witnessing that $A$ was collapsed.

\ref{df:Add-collapses-A} $\implies$ \ref{df:Add-makes-A-DInf}: Trivial.

\ref{df:Add-makes-A-DInf} $\implies$ \ref{df:finite-subsets-is-DF}: Assume towards a contradiction that $[A]^{<\omega}$ is Dedekind-finite, but suppose that $p\forces\dot f\colon\check\omega\to\check A$ is injective, and for simplicity, assume that $p=\1$. Let $\varphi(\check n,\check a)$ denote the formula $\dot{f}(\check n)=\check a$. Let $s^n_a$ be the set $\supp^{M^{\varphi(\check n,\check a)}}$ for $n<\omega$ and $a\in A$. By the finite decision property in \autoref{lemma:finite decision property}, $M^{\varphi(\check n,\check a)}$ and thus $s^n_a$ are finite.

\begin{subclaim}
For any $n<\omega$, $\{a\in A\mid s_a^n\neq\varnothing\}$ is finite.
\end{subclaim}
\begin{proof}
Fix $n<\omega$ and let $B$ denote $\{a\in A\mid s_a^n\neq\varnothing\}$. Towards a contradiction, suppose that $B$ is infinite. First assume that for all $k<\omega$, there are only finitely $a\in B$ with $|s_a^n|=k$. Then $A_k$ defined as $\bigcup_{|s_a^n|=k} s^n_a$ is finite as well. Since the sets $M^{\varphi(\check n,\check a)}$ are disjoint as $a\in A$ varies, $\vec{A}=\tup{A_k\mid k<\omega}$ has an injective infinite subsequence and hence $[A]^{<\omega}$ is Dedekind-infinite.

Now assume that for some $k<\omega$, there are infinitely many $a\in B$ with $|s_a^n|=k$. Note that $k\neq 0$ by the definition of $B$. Since the sets $M^{\varphi(\check n,\check a)}$ are disjoint as $a\in A$ varies, the set $S=\{s_a^n\mid a\in B\}$ is infinite. This set contains arbitrarily large sunflowers by \autoref{lemma:sunflower}.

Let $T$ be a sunflower in $S$ of size $3^k+1$. Since the centre of $T$ has size ${\leq}k$, there are at most $3^k$ possible values for restrictions to the centre of $T$. Hence we can find $a\neq b$ in $B$ and conditions $p\in M^{\varphi(\check n,\check a)}$ and $q\in M^{\varphi(\check n,\check a)}$ with $p$ compatible with $q$. But this contradicts the fact that conditions in $M^{\varphi(\check n,\check a)}$ and $M^{\varphi(\check n,\check b)}$ are pairwise incompatible.
\end{proof}

Note that $s^n_a=\varnothing$ implies that $M^n_a$ is either empty or contains only $\1$. Thus the subclaim implies that $M^n$ defined as $\bigcup_{a\in A} M^{\varphi(\check n,\check a)}$ is finite for all $n<\omega$. In other words, there are only finitely many possible choices for $\dot{f}(n)$. However,  $\1$ forces that $\dot{f}$ has infinite range and thus $\bigcup_{n<\omega}M^n$ is an infinite subset of $A$. This allows us to construct a disjoint sequence in $[A]^{<\omega}$, in contradiction to the assumption that $[A]^{<\omega}$ is Dedekind-finite.
\end{proof}
Note that by the homogeneity of $\Add(A,1)$ the above proof does not depend on the choice of a generic filter, and since collapsing $A$ or making it Dedekind-infinite can be stated as a formula whose free variables are all canonical ground model names, there is no dependence on any specific condition either.

\begin{lemma}\label{lemma:adding Cohen reals}
Let $A$ be a Dedekind-finite set. The following are equivalent.
\begin{enumerate-(a)}
\item \label{cohen:finite-sets-is-DF}
$[A]^{<\omega}$ is Dedekind-infinite.
\item \label{cohen:Add-adds-a-Cohen}
$\Add(A,1)$ adds a Cohen real.
\item \label{cohen:Add-adds-a-real}
$\Add(A,1)$ adds a real.
\item \label{cohen:Add-adds-a-set-of-ord}
$\Add(A,1)$ adds a set of ordinals.
\end{enumerate-(a)}
\end{lemma}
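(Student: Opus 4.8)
The plan is to prove the four conditions equivalent via the cycle \ref{cohen:finite-sets-is-DF} $\Rightarrow$ \ref{cohen:Add-adds-a-Cohen} $\Rightarrow$ \ref{cohen:Add-adds-a-real} $\Rightarrow$ \ref{cohen:Add-adds-a-set-of-ord} $\Rightarrow$ \ref{cohen:finite-sets-is-DF}. The two middle implications are immediate and need no argument: a Cohen real is in particular a (new) real, and a real is in particular a (new) set of ordinals. All of the content therefore sits in the first and last implications, and the bulk of the work is in \ref{cohen:Add-adds-a-set-of-ord} $\Rightarrow$ \ref{cohen:finite-sets-is-DF}.

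For \ref{cohen:finite-sets-is-DF} $\Rightarrow$ \ref{cohen:Add-adds-a-Cohen}, I would fix a disjoint sequence $\tup{A_n\mid n<\omega}$ witnessing that $[A]^{<\omega}$ is Dedekind-infinite and extract a Cohen real from the generic by \emph{parities}. Let $\dot c$ name the function $c\colon\omega\to2$ where $c(n)$ is the parity of $|\set{a\in A_n\mid(\bigcup G)(a)=1}|$, and $\bigcup G\colon A\to2$ is the (total, by genericity) generic function. The key point is that $c(n)$ is definable from the \emph{unordered} finite set $A_n$, so no enumeration of the $A_n$'s — which need not exist — is needed. To see $c$ is $\Add(\omega,1)$-generic over $V$, I would show that for every dense open $D\subseteq\Add(\omega,1)$ in $V$ the set of $q\in\Add(A,1)$ forcing $\check s\subseteq\dot c$ for some $s\in D$ is dense: given $p$, read off the finite partial parity function $t$ already decided by $p$ (the $n$ with $A_n\subseteq\dom p$), extend $t$ to some $s\in D$ by density, and then extend $p$ to a $q$ with $\dom q=\dom p\cup\bigcup_{n\in\dom s}A_n$ realising the parities prescribed by $s$. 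Fixing such a $q$ is a single choice from one nonempty finite set of conditions, so the argument is valid in $\ZF$.

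For \ref{cohen:Add-adds-a-set-of-ord} $\Rightarrow$ \ref{cohen:finite-sets-is-DF} I would argue the contrapositive: assume $[A]^{<\omega}$ is Dedekind-finite and show $\Add(A,1)$ adds no new set of ordinals. By \autoref{lemma:finite decision property}, $\Add(A,1)$ then has the finite decision property. Since every set of ordinals in a generic extension is bounded by a ground-model ordinal, it suffices to take a name $\dot X$ with $\1\forces\dot X\subseteq\check\lambda$; for $\alpha<\lambda$ let $\varphi_\alpha$ be $\check\alpha\in\dot X$, so that $M^{\varphi_\alpha}$ is finite. Put $B_\alpha=\bigcup\supp^{M^{\varphi_\alpha}}$ and $T=\bigcup_{\alpha<\lambda}B_\alpha\subseteq A$. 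The crucial claim is that $T$ is finite: otherwise the ordinal-indexed sequence $\tup{B_\alpha\mid\alpha<\lambda}$ of finite subsets of $A$ has infinite union, and a recursion along $\lambda$ — repeatedly taking the \emph{least} $\alpha$ whose $B_\alpha$ is not contained in the finite union accumulated so far, and keeping its new elements — produces a disjoint sequence, contradicting Dedekind-finiteness of $[A]^{<\omega}$. (Using the well-order of $\lambda$ to select least witnesses is what keeps this choiceless.) Once $T$ is known finite, membership $\alpha\in\dot X^G$ depends only on $(\bigcup G)\restriction T$ — because every $p\in M^{\varphi_\alpha}$ has $\dom p\subseteq T$ and $p\in G$ iff $p\subseteq\bigcup G$ — and $(\bigcup G)\restriction T$ is one of the finitely many functions $g\colon T\to2$. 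Setting $Y_g=\set{\alpha<\lambda\mid\exists p\in M^{\varphi_\alpha},\ p\subseteq g}\in V$, one checks $\dot X^G=Y_{(\bigcup G)\restriction T}$, a member of the finite family $\set{Y_g\mid g\in2^T}\subseteq V$, so $\dot X^G\in V$.

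The main obstacle is precisely the finiteness of $T$ in the last implication: this is where Dedekind-finiteness of $[A]^{<\omega}$, routed through the finite decision property, is converted into a genuine bound on how much of $A$ a name for a set of ordinals can depend on. Everything else is routine — the middle implications are trivial, and the read-off is mechanical once $T$ is finite — and the only recurring subtlety is to keep all constructions inside $\ZF$, which is exactly what forces the use of parities in \ref{cohen:finite-sets-is-DF} $\Rightarrow$ \ref{cohen:Add-adds-a-Cohen} and of least-ordinal recursions rather than arbitrary enumerations in bounding $T$.
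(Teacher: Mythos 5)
Your proposal is correct and follows essentially the same route as the paper's proof: the two middle implications are dismissed as trivial, \ref{cohen:finite-sets-is-DF} $\Rightarrow$ \ref{cohen:Add-adds-a-Cohen} reads a Cohen real off the generic blockwise along a disjoint sequence (the paper uses the predicate ``the generic is $0$ on all of $A_n$'' where you use parity, an immaterial difference), and \ref{cohen:Add-adds-a-set-of-ord} $\Rightarrow$ \ref{cohen:finite-sets-is-DF} runs through the finite decision property of \autoref{lemma:finite decision property}. Your write-up merely makes explicit two steps the paper leaves to the reader, namely that a name whose total support $T$ is finite evaluates to one of the ground-model sets $Y_g$ for $g\in 2^T$, and that an ordinal-indexed family of finite sets with infinite union yields a disjoint sequence without any use of choice.
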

\begin{proof}
\ref{cohen:finite-sets-is-DF} $\implies$ \ref{cohen:Add-adds-a-Cohen}: Let $\vec{A}=\tup{A_n\mid n<\omega}$ be a disjoint sequence witnessing that $[A]^{<\omega}$ is Dedekind-infinite. Let $\dot x=\{\tup{p,\check n}\mid p[A_n]=\{0\}\}$. A density argument shows that $\dot x$ is a name for a Cohen real.

\ref{cohen:Add-adds-a-Cohen} $\implies$ \ref{cohen:Add-adds-a-real} $\implies$ \ref{cohen:Add-adds-a-set-of-ord} is trivial.

\ref{cohen:Add-adds-a-set-of-ord} $\implies$ \ref{cohen:finite-sets-is-DF}: Suppose that $\1$ forces that $\dot{X}$ is a new subset of some ordinal $\eta$, and let $\varphi(\check\alpha)$ denote the formula $\check\alpha\in\dot X$. By the finite decision property in \autoref{lemma:finite decision property}, $M^{\varphi(\check\alpha)}$ is finite for all $\alpha<\eta$. However, the union of the domains of conditions in $\bigcup_{\alpha<\eta}M^{\varphi(\check\alpha)}$ is infinite, since $\dot X$ is a name for a new set of ordinals. Thus it is easy to construct a disjoint sequence in $[A]^{<\omega}$.
\end{proof}

We equip $2^A$ with the product topology. Moreover, let $N_p=\{x\in 2^A\mid p\subseteq x\}$ denote the basic open set associated to $p\in\Add(A,1)$. Note that $2^A$ is a Hausdorff space. We will consider the following notion: a topological space is called \textit{extremally disconnected} if the closure of every open set is open. Note that for Hausdorff spaces, this strengthens the property of being \textit{totally disconnected}.

\begin{lemma} \label{lemma:extremally disconnected}
The following are equivalent.
\begin{enumerate-(a)}
\item \label{ext:Cantor-cube-ext-disc}
$2^A$ is extremally disconnected.
\item \label{ext:finite-sets-is-DF}
$[A]^{<\omega}$ is Dedekind-finite.
\end{enumerate-(a)}
\end{lemma}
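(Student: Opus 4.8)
The plan is to prove both implications in contrapositive form, relating \emph{extremal disconnectedness} to the antichain characterisation already supplied by \autoref{lemma:characters-of-finite-cc}. Throughout I would use the standard (choice-free) reformulation: a Hausdorff space is extremally disconnected if and only if any two disjoint open sets have disjoint closures. One direction of this reformulation is immediate; for the other, given an open $U$ one sets $V=2^A\setminus\overline U$, and the hypothesis forces $\overline V=V$, whence $\overline U=2^A\setminus V$ is open. Since each basic set $N_p$ is clopen, an open set is exactly a union $\bigcup_{p\in S}N_p$, and ``$N_q$ meets $U$'' translates into ``$q$ is compatible with some $p\in S$''. This dictionary between the topology of $2^A$ and compatibility in $\Add(A,1)$ is what ties the two sides of the equivalence together.

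For \ref{ext:finite-sets-is-DF} $\implies$ \ref{ext:Cantor-cube-ext-disc} I would argue the contrapositive: if $2^A$ is not extremally disconnected, then $[A]^{<\omega}$ is Dedekind-infinite. Fix an open $U$ whose closure is not open and pick $x\in\overline U$ with no basic neighbourhood contained in $\overline U$; note $x\notin U$, since $U$ is open and hence $U\subseteq\operatorname{int}(\overline U)$. I then build a countably infinite antichain $\{t_n\mid n<\omega\}$ by recursion on finite sets $D_n\subseteq A$, starting from $D_0=\varnothing$ and writing $q_n=x\restriction D_n$. Because $x\in\overline U$, the neighbourhood $N_{q_n}$ meets $U$, giving an interior witness $t_n\supseteq q_n$ with $N_{t_n}\subseteq U$; and because $x\notin U$, this $t_n$ must disagree with $x$ at some coordinate $c_n\in\dom t_n\setminus D_n$. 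Setting $D_{n+1}=\dom t_n$ then guarantees $c_m\in\dom t_n$ with $t_n(c_m)=x(c_m)\neq t_m(c_m)$ for all $m<n$, so the $t_n$ are pairwise incompatible. An infinite antichain yields Dedekind-infiniteness of $[A]^{<\omega}$ by \autoref{lemma:characters-of-finite-cc}.

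For \ref{ext:Cantor-cube-ext-disc} $\implies$ \ref{ext:finite-sets-is-DF}, again contrapositively, I would start from a disjoint sequence $\tup{A_n\mid n<\omega}$ witnessing that $[A]^{<\omega}$ is Dedekind-infinite and produce two disjoint open sets sharing a closure point, which by the reformulation above defeats extremal disconnectedness. Concretely, I reuse the explicit antichain $\{p_n\}$ from the proof of \autoref{lemma:characters-of-finite-cc} (here $p_0$ is $1$ on $A_0$, and for $n\geq1$ the condition $p_n$ is $1$ on $A_n$ and $0$ on $\bigcup_{i<n}A_i$) and split it by parity: $U=\bigcup_{n\text{ even}}N_{p_n}$ and $V=\bigcup_{n\text{ odd}}N_{p_n}$. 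Since $\{p_n\}$ is an antichain these are disjoint, and a short compatibility computation shows that the constant function $\mathbf 0\in 2^A$ lies in both $\overline U$ and $\overline V$: any basic neighbourhood of $\mathbf 0$ is determined by a finite set $D$, and since the $A_n$ are disjoint, all but finitely many $p_n$ of each parity satisfy $D\cap A_n=\varnothing$ and are therefore compatible with it. Thus $\mathbf 0\in\overline U\cap\overline V$ while $U\cap V=\varnothing$, so $2^A$ is not extremally disconnected. This direction is entirely explicit and uses no choice.

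The main obstacle is the recursion in the first implication: choosing $t_n$ at each stage is a selection from a subset of $\Add(A,1)$ that need not be well-orderable, so as stated the construction naively appeals to Dependent Choice, whereas the theorem is meant to hold in $\ZF$. The way to remove this, in the spirit of \autoref{lemma:HKRR} and the other lemmas of this section, is to first extract from $U$ and $x$ a single $\omega$-indexed family of interior conditions, so that the antichain members can afterwards be chosen by least index rather than by unconstrained selection, and then to use the sunflower lemma to bound how such conditions may agree on shared coordinates, forcing incompatibility through centres of size at most $k$ exactly as in \autoref{lemma:characters-of-finite-cc}. Verifying that the failure of extremal disconnectedness genuinely supplies such an $\omega$-indexed family in $\ZF$ is the delicate point on which the whole argument turns.
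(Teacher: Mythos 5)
Your second implication --- from a disjoint sequence $\tup{A_n\mid n<\omega}$ to the failure of extremal disconnectedness --- is correct and choice-free; it is essentially the paper's argument with the antichain $\{p_n\}$ split by parity and the ``disjoint open sets have disjoint closures'' reformulation doing the work of the paper's direct claim that $\overline U$ is not a neighbourhood of $\mathbf 0$. That packaging is fine (and in fact slightly more robust, since it does not depend on identifying the closure of a single $U$).

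The other implication, however, has a genuine gap, and you have correctly located it yourself: the recursion producing $t_0,t_1,\dots$ makes an unconstrained selection at each stage from the set of interior witnesses $t\supseteq x\restriction D_n$ with $N_t\subseteq U$, and $D_{n+1}=\dom t_n$ depends on the selection just made. This is an instance of Dependent Choice, and nothing in the hypotheses makes these sets of witnesses well-orderable, so in $\ZF$ the ``sequence'' $\tup{t_n}$ need not exist. The proposed repair --- ``first extract from $U$ and $x$ a single $\omega$-indexed family of interior conditions'' --- is not a repair but a restatement of the missing step: producing any $\omega$-indexed family of the relevant conditions is exactly what requires choice here, and the sunflower lemma does not manufacture one. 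The paper avoids the issue entirely by proving this direction \emph{directly} rather than contrapositively: assuming $[A]^{<\omega}$ is Dedekind-finite, \autoref{lemma:finite decision property} gives the finite decision property, applied to the statement $\dot g\notin\dot N_I$; the set of minimal conditions $p$ with $N_p\cap N_I=\varnothing$ is then finite, so the complement of $\overline{N_I}$ is a finite union of basic clopen sets and $\overline{N_I}$ is open. If you want to keep your contrapositive framing, the correct route is not to build an antichain by recursion but to observe that the failure of extremal disconnectedness exhibits a single formula $\varphi$ with $M^{\varphi}$ infinite (namely, the set $N=\{p\mid N_p\cap U=\varnothing\}$ must have infinitely many minimal elements, else $\overline U$ would be clopen), and then invoke \autoref{lemma:finite decision property} --- whose proof is where the $\ZF$ sunflower lemma \autoref{lemma:HKRR} is actually deployed --- to conclude that $[A]^{<\omega}$ is Dedekind-infinite.
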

\begin{proof}
\ref{ext:Cantor-cube-ext-disc} $\implies$ \ref{ext:finite-sets-is-DF}: Suppose that $[A]^{<\omega}$ is Dedekind-infinite. We will show that $2^A$ is not extremally disconnected. To see this, let $\vec{A}=\tup{A_n\mid n<\omega}$ be a disjoint sequence and $B_n=\bigcup_{i\leq n} A_i$. Define $p_0\colon B_0\to\{0,1\}$ to take the value $1$ on $B_0$, and for $n\in\omega\setminus\{0\}$, define $p_n\colon B_n\to \{0,1\}$ to take value $1$ on $A_n$ and $0$ on $B_{n-1}$. Then $U$ defined as the union $\bigcup_{n<\omega} N_{p_n}$ is open. But the closure of $U$ is $U\cup\{\mathbf 0\}$ which is not open, since it does not contain any open neighbourhood of $\mathbf 0$, where $\mathbf 0$ denotes the constant function $\mathbf 0(a)=0$ for all $a\in A$.

\ref{ext:finite-sets-is-DF} $\implies$ \ref{ext:Cantor-cube-ext-disc}:\footnote{This argument, like \autoref{lemma:finite decision property}, can be made without forcing. Here we can us the topology.} Suppose that $[A]^{<\omega}$ is Dedekind-finite. Let $N_I$ be an open set of $2^A$ given by the union $\bigcup_{p\in I}N_p$. We will show that its closure is open.

Note that $f\in 2^A$ fails to be in the closure of $N_I$ if and only if there is some $p\in\Add(A,1)$ such that $p\subseteq f$ and $N_p\cap N_I=\varnothing$, which equivalently means that $p\forces\dot g\notin\dot N_I$, where $\dot g$ is the canonical generic function $A\to\{0,1\}$ and $\dot N_I$ is the re-interpretation of the union $\bigcup_{p\in I}N_p$ in the generic extension. But by the finite decision property, which holds by \autoref{lemma:finite decision property}, there is a finite set, $M$, of minimal elements $p$ such that $p\forces\dot g\notin\dot N_I$. In particular, $C=\bigcup_{p\in M}N_p$ is a finite union of clopen sets and thus closed.
So the closure of $N_I$ is open, as wanted.
\end{proof}

We finish with a question of interest, as one should.
\begin{question}
As we observed at the beginning of this section, $\Add(A,1)$ is the same as $\Col(A,2)$.
Moreover, we proved in the previous section that $\Col(A,\kappa)$ does not make $A$ Dedekind-infinite, where $A$ is the canonical Dedekind-finite set in Cohen's first model. Is there a similar characterisation for $\Col(A,\kappa)$, or even more generally for an arbitrary forcing that adds a fresh subset to a Dedekind-finite set?
\end{question}
\section*{Acknowledgements}
The authors would like to thank Matti Rubin for inviting the second author to Israel in March~2015 where this project began, and to the Hausdorff Center for Mathematics in Bonn and to the Ben-Gurion University of the Negev for supporting him during this visit. Additional thanks go to the organisers of the HIF programme for extending an invitation to visit Cambridge in October 2015, where work on this project continued. We would like to express our gratitude to the reviewers whose suggestions helped improve not only the presentation of this paper, but also its connections to past and current research. Final acknowledgement goes to Mike Oliver who gave a talk titled ``How to have more things by forgetting where you put them'', which inspired the title of this work.

\subsection*{Funding statement}
The first author was supported by the Royal Society Newton International Fellowship, grant no.~NF170989. This project has received funding from the European Union's Horizon 2020 research and innovation programme under the Marie Sk\l odowska-Curie grant agreement No 794020 (IMIC) of the second author.
\bibliographystyle{amsplain}
\providecommand{\bysame}{\leavevmode\hbox to3em{\hrulefill}\thinspace}
\providecommand{\MR}{\relax\ifhmode\unskip\space\fi MR }
% \MRhref is called by the amsart/book/proc definition of \MR.
\providecommand{\MRhref}[2]{%
  \href{http://www.ams.org/mathscinet-getitem?mr=#1}{#2}
}
\providecommand{\href}[2]{#2}

\end{document}